\newtheorem{theorem}{Theorem}
\newtheorem{corollary}[theorem]{Corollary}
\newtheorem{lemma}[theorem]{Lemma}
\newtheorem{proposition}[theorem]{Proposition}
\newtheorem{claim}[theorem]{Claim}
\newtheorem*{claim*}{Claim}
\theoremstyle{definition}
\newcounter{boldSectionCounter}
\newcounter{boldSubsectionCounter}
\newcommand{\tdt}{
	\times\dots\times
}
\newcommand{\boldSection}[1]{
   \large\begin{center}\noindent {\bfseries{\scshape \S \arabic{boldSectionCounter} #1}}\\[6pt]\end{center}\normalsize
   \stepcounter{boldSectionCounter}
	 \setcounter{boldSubsectionCounter}{1}
}
\newcommand{\tightoverset}[2]{
  \mathop{#2}\limits^{\vbox to -.5ex{\kern-1.15ex\hbox{$#1$}\vss}}}
\newcommand{\dom}{
	\operatorname{dom}
}
\newcommand\restr[2]{{
  \left.\kern-\nulldelimiterspace 
  #1 
  \vphantom{\big|} 
  \right|_{#2} 
}}	
\newcommand\blfootnote[1]{%
  \begingroup
  \renewcommand\thefootnote{}\footnote{#1}%
  \addtocounter{footnote}{-1}%
  \endgroup
}
\newcommand\ex{
	\mathop{\mathbb{E}}
}
\newcommand*\bcdot{\mathpalette\bigcdot@{0.5}}
\newcommand*\bigcdot@[2]{\mathbin{\vcenter{\hbox{\scalebox{#2}{$\m@th#1\bullet$}}}}}
\def\blfootnote{\gdef\@thefnmark{}\@footnotetext}
\def\colon{:}
\begin{document}
\begin{center}\Large\noindent{\bfseries{\scshape A note on extensions of multilinear maps defined on multilinear varieties}}\\[24pt]
\normalsize\noindent{\scshape W. T. Gowers\dag and L. Mili\'cevi\'c\ddag}\\[6pt]
\end{center}
\blfootnote{\noindent\dag\ Royal Society 2010 Anniversary Research Professor, University of Cambridge\\
\noindent\ddag\ Mathematical Institute of the Serbian Academy of Sciences and Arts\\\phantom{\dag\ }Email: luka.milicevic@turing.mi.sanu.ac.rs}

\footnotesize
\begin{changemargin}{1in}{1in}
\centerline{\sc{\textbf{Abstract}}} Let $G_1, \dots, G_k$ be finite-dimensional vector spaces over a finite field $\mathbb{F}$. A multilinear variety of codimension $d$ is a subset of $G_1 \tdt G_k$ defined as the zero set of $d$ forms, each of which is multilinear on some subset of the coordinates. A map $\phi$ defined on a multilinear variety $B$ is multilinear if for each coordinate $d$ and all choices of $x_i \in G_i$, $i\not=d$, the restriction map $y \mapsto \phi(x_1, \dots, x_{d-1}, y, x_{d+1}, \dots, x_k)$ is linear where defined. In this note, we show that a multilinear map defined on a multilinear variety of codimension $d$ coincides on a multilinear variety of codimension $d^{O(1)}$ with a multilinear map defined on the whole of $G_1\times\dots\times G_k$.
\end{changemargin}

\boldSection{Introduction}

In~\cite{U4inverse}, the authors proved a quantitative version of the inverse theorem for the Gowers $U^4$ norm over finite fields. The proof depended on a series of results about maps that have bilinear behaviour on subsets of $\mathbb F_p^n$, which included he following theorems. In the statements, $G_1, G_2, H$ are finite-dimensional vector spaces over $\mathbb F_p$ and $\omega = e^{2\pi i/p}$.

\begin{theorem}[Theorem 7.7 in~\cite{U4inverse}]\label{highRankBilExt}Suppose that $r \geq 20d$ and that $\beta : G_1 \times G_2 \to \mathbb{F}_p^d$ is a bilinear map that satisfies $\ex_{x \in G_1, y \in G_2} \omega^{\lambda \cdot \beta(x,y)} \leq p^{-r}$ for all $\lambda \in \mathbb{F}_p^d \setminus \{0\}$. Let $D = \{(x,y) \in G_1 \times G_2 \colon \beta(x,y) = 0\}$. Let $\phi \colon D \to H$ be a bilinear map, in the sense that for each $x \in G_1$, the map $\phi_{x \cdot} \colon \{y \in G_2 : (x,y) \in D\} \to H$ given by $y \mapsto \phi(x,y)$ is linear, and the analogous statement holds for the second coordinate. Then there is bilinear map $\Phi : G_1 \times G_2 \to H$ such that $\Phi(x,y) = \phi(x,y)$ for all $(x,y) \in D$.\end{theorem}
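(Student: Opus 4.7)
\noindent\textit{Proof plan.} The plan is to argue by a cocycle/coboundary method. For each $x \in G_1$, the set $D_x = \{y \in G_2 : \beta(x, y) = 0\}$ is a linear subspace of $G_2$ of codimension at most $d$, and $\phi(x, \cdot)|_{D_x}$ is linear. Since any linear map on a subspace of a finite-dimensional vector space extends to the whole space, we may choose a linear extension $L_x : G_2 \to H$. The extension is determined only up to an element of $V_x \otimes H$, where $V_x = \spn(\beta^1(x, \cdot), \ldots, \beta^d(x, \cdot)) \subseteq G_2^*$ is the annihilator of $D_x$. The bilinearity of $\phi$ on $D$ forces the defect $\Delta(x_1, x_2) := L_{x_1 + x_2} - L_{x_1} - L_{x_2}$ to vanish on $D_{x_1} \cap D_{x_2}$, so that $\Delta(x_1, x_2) \in (V_{x_1} + V_{x_2}) \otimes H$. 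Under the high-rank hypothesis on $\beta$, for generic $x_1, x_2 \in G_1$ this sum is direct of dimension $2d$, giving a unique decomposition
\[
\Delta(x_1, x_2) = \sum_{i = 1}^d \beta^i(x_1, \cdot) \otimes a_i(x_1, x_2) + \sum_{i = 1}^d \beta^i(x_2, \cdot) \otimes a_i(x_2, x_1),
\]
where the second sum is forced to be the symmetric partner of the first by $\Delta(x_1, x_2) = \Delta(x_2, x_1)$.

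To produce $\Phi$, I would seek a correction $c : G_1 \to G_2^* \otimes H$ satisfying $c(x) \in V_x \otimes H$ for every $x$ together with the coboundary equation $c(x_1 + x_2) - c(x_1) - c(x_2) = \Delta(x_1, x_2)$. The desired extension is then $\Phi(x, y) := L_x(y) - c(x)(y)$, which is manifestly bilinear in $(x, y)$ and agrees with $\phi$ on $D$ because $c(x)$ vanishes on $D_x$. The 2-cocycle identity satisfied by $\Delta$, combined with the linear independence of the forms $\beta^i(x_j, \cdot)$ for $i = 1, \ldots, d$ and $j = 1, 2, 3$ (valid for generic $x_1, x_2, x_3$ under the high-rank hypothesis), translates into a system of first-order difference equations on the coefficient functions $a_i$, one decoupled system per $i$. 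These solve to $a_i(x_1, x_2) = g_i(x_1 + x_2) - g_i(x_1)$ for some functions $g_i : G_1 \to H$, by a direct calculation generalising the $d = 1$ case; setting $c(x) := \sum_i \beta^i(x, \cdot) \otimes g_i(x)$ then supplies the required correction in $V_x \otimes H$, as a direct expansion confirms.

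The main obstacle is the handling of the exceptional pairs (and triples) of arguments where the high-rank decomposition fails to be unique, together with the verification that the identities derived for generic arguments upgrade to hold globally, so that $\Phi$ agrees with $\phi$ at \emph{every} point of $D$ and not merely on a density-$1$ subset. The quantitative bound $r \ge 20d$ is exactly what provides the slack: it ensures that the exceptional set has density $p^{-\Omega(r)}$, so that standard rigidity results for bilinear forms on high-rank varieties (in particular, that any bilinear form vanishing on $D$ must lie in $\spn(\beta^1, \ldots, \beta^d)$) can be invoked to propagate the derived algebraic identities from the generic subset of $G_1 \times G_1$ to all of $G_1 \times G_1$, and hence to conclude that $\Phi|_D = \phi$ exactly.
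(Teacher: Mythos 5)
Your proposal takes a cohomological cocycle--coboundary route that is genuinely different from the paper's method. The paper proves its multilinear generalization (Theorem~\ref{qrExtension}) by a connectivity argument: the one-sided regularity lemma (Proposition~\ref{nonzeroConnIntro}) shows that the set of points in the variety where a suitably quasirandom form is non-zero is connected of bounded diameter, the extension is defined along paths in that set, and well-definedness follows from locating an auxiliary ``orthogonal'' point (Proposition~\ref{orthoStrong}). You instead choose arbitrary linear extensions $L_x$ of the slices $\phi(x,\cdot)$, record the defect cocycle $\Delta(x_1,x_2)=L_{x_1+x_2}-L_{x_1}-L_{x_2}$, observe that it lands in $(V_{x_1}+V_{x_2})\otimes H$, and seek a coboundary correction $c$ with $c(x)\in V_x\otimes H$. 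The cocycle formulation is cleaner in the bilinear, globally high-rank setting; the paper's path-based method is what allows it to pass to the multilinear setting and, more importantly, to work with only a ``one-sided'' quasirandomness hypothesis on a single form $\rho$ rather than a global high-rank assumption.

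There is, however, a genuine gap at the step you yourself flag as ``the main obstacle.'' The decomposition of $\Delta$, the coefficient functions $a_i$ and $g_i$, and the difference equations relating them are all available only on the set of pairs (or triples) where the forms $\beta^i(x_j,\cdot)$ are jointly independent. To conclude that $\Phi(x,y)=L_x(y)-c(x)(y)$ is bilinear, you must verify the coboundary equation $c(x_1+x_2)-c(x_1)-c(x_2)=\Delta(x_1,x_2)$ for \emph{every} pair, and the rigidity fact you invoke --- that a bilinear form vanishing on $D$ lies in $\spn(\beta^1,\dots,\beta^d)$ --- does not do this job: the coboundary equation involves the non-linear functions $g_i$ and is a pointwise statement over $G_1\times G_1$, not an identity between bilinear forms on $G_1\times G_2$. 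What actually closes the gap is the observation that $f(x_1,x_2):=c(x_1+x_2)-c(x_1)-c(x_2)-\Delta(x_1,x_2)$ is itself a symmetric $2$-cocycle in $(x_1,x_2)$ and vanishes on generic pairs, so for arbitrary $(a,b)$ one can pick $x_3$ making $(a+b,x_3)$, $(a,b+x_3)$, and $(b,x_3)$ all generic (possible because $r\geq 20d$ keeps the exceptional set small) and deduce $f(a,b)=0$ from $f(a,b)+f(a+b,x_3)=f(a,b+x_3)+f(b,x_3)$. You also need to define $g_i$ globally --- say $g_i(x):=a_i(z,x-z)$ for a fixed generic $z$ --- and verify consistency across admissible choices of $z$, which again only follows on the generic set and needs an argument. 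Lastly, once the coboundary equation holds everywhere, $\Phi|_D=\phi$ is automatic, since $c(x)$ vanishes on $D_x$ by construction; so the concern that $\Phi$ might agree with $\phi$ only on a density-one subset of $D$ is misplaced --- the real question is whether $\Phi$ is bilinear at all.
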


The condition on $\beta$ in the above theorem is equivalent to the statement that the bilinear form $\lambda.\beta$ has rank at least $r$ for every non-zero $\lambda\in\mathbb F_p^d$. Without it, the conclusion is not necessarily true, but the next theorem tells us that if the condition does not hold, then we can pass to small-codimensional subspaces where it does.

\begin{theorem}[Theorem 5.2 in~\cite{U4inverse}]Let $\beta : G_1 \times G_2 \to \mathbb{F}_p^d$ be a bilinear map and let $r$ be a positive integer. Then, there are subspaces $V_1 \leq G_1, V_2 \leq G_2$ of codimension at most $rd$ such that $\ex_{x \in V_1, y \in V_2} \omega^{\lambda \cdot \beta(x,y)} \leq p^{-r}$ for all $\lambda \in \mathbb{F}_p^d \setminus \{0\}$.\end{theorem}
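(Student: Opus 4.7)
I would prove this by iterative rank reduction on the target $\mathbb{F}_p^d$, peeling off one direction of the target at each step at a cost of at most $r-1$ codimensions in $V_1$. The starting point is the elementary identity
$\ex_{x\in V_1,\,y\in V_2}\omega^{b(x,y)} = p^{-\mathrm{rank}(b)}$
for any bilinear form $b\colon V_1\times V_2\to\mathbb{F}_p$, where $\mathrm{rank}(b)$ is the dimension of the image of the linear map $V_1\to V_2^{*}$, $x\mapsto b(x,\cdot)|_{V_2}$ (a direct Fourier calculation). This converts the desired bias bound into the equivalent rank condition $\mathrm{rank}(\lambda\cdot\beta|_{V_1\times V_2})\geq r$.

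I maintain subspaces $V_1\leq G_1$ and $V_2\leq G_2$ together with a subspace $U\leq \mathbb{F}_p^d$ containing the image $\beta(V_1\times V_2)$, starting from $(G_1,G_2,\mathbb{F}_p^d)$. At each step I search for $\lambda\in\mathbb{F}_p^d$ with $\lambda|_U\neq 0$ for which $b_\lambda:=\lambda\cdot\beta|_{V_1\times V_2}$ has rank $s<r$. If no such $\lambda$ exists, the process halts; otherwise, the left kernel $V_1':=\{x\in V_1:b_\lambda(x,\cdot)\equiv 0 \text{ on } V_2\}$ has codimension exactly $s\leq r-1$ in $V_1$, and on $V_1'\times V_2$ the image of $\beta$ lies in $U\cap\ker\lambda$, a hyperplane of $U$. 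I then replace $V_1$ by $V_1'$ and $U$ by $U\cap\ker\lambda$, and repeat.

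Since $\dim U$ strictly decreases at each iteration, the procedure halts after at most $d$ steps, at which point $V_1$ has codimension at most $d(r-1)\leq rd$ in $G_1$, while $V_2$ remains unchanged. The only real content --- and essentially the only step requiring care --- is that every bad $\lambda$ kills exactly one dimension of the target while costing strictly fewer than $r$ dimensions in the source, so that the two budgets balance to give the claimed bound $rd$. I expect the only potentially subtle point to be the degenerate terminal situation with $U\subsetneq \mathbb{F}_p^d$, which must be interpreted via the corestriction of $\beta$ to $U$, so that ``for all $\lambda\neq 0$'' is read modulo $U^\perp$; alternatively one may symmetrise the argument to shrink $V_2$ as well, but this is not needed for the asserted codimension bound.
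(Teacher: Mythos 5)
The paper cites this theorem from~\cite{U4inverse} without reproducing its proof, so there is no internal argument to compare against; I evaluate the proposal on its own. Your iterative rank-reduction scheme is correct and is the natural route to the statement. The Gauss-sum identity $\ex_{x,y}\omega^{b(x,y)}=p^{-\operatorname{rank}(b)}$ is right, the left kernel of a rank-$s$ form $b_\lambda$ on $V_1\times V_2$ does have codimension exactly $s$ in $V_1$ by rank--nullity, passing to that kernel does force $\beta(V_1'\times V_2)\subset U\cap\ker\lambda$, and $\dim U$ drops by one each round, so the loop runs at most $d$ times and $\operatorname{codim}(V_1)\le d(r-1)\le rd$ while $V_2=G_2$. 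One remark: the subtlety you flag at the end is not merely a matter of care --- the theorem \emph{as literally quoted here} is false, already for $\beta\equiv 0$ (every $\lambda\ne 0$ then gives expectation $1$). The correct reading, which your argument actually proves and which is what Corollary~\ref{bilinearExtension} needs, is precisely the one you give: the bias bound holds for every $\lambda$ that does not vanish on $U=\beta(V_1\times V_2)$, equivalently after corestricting $\beta$ to $U$ and discarding the redundant coordinates. So this is a defect in the quoted statement rather than a gap in your proof. A small stylistic point: you never need the ``search for a bad $\lambda$'' to be anything but greedy, but you should state explicitly that if the chosen $\lambda$ has rank $0$ (i.e.\ $\lambda\cdot\beta$ already vanishes on $V_1\times V_2$) then $V_1'=V_1$ and the step is free --- this is in fact what handles the degenerate directions and makes the halting argument clean.
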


Let us say that a set of the form $\{(x,y) \in G_1 \times G_2 \colon \alpha(x) = 0, \beta(y) = 0, \gamma(x,y) = 0\}$ for linear maps $\alpha: G_1 \to \mathbb{F}_p^{t_1}, \beta : G_2 \to \mathbb{F}_p^{t_2}$ and bilinear map $\gamma : G_1\times G_2 \to \mathbb{F}_p^{t_3}$ is a \emph{bilinear variety of codimension} $t = t_1 + t_2 + t_3$. We may combine the two theorems above into a single result. 

\begin{corollary}\label{bilinearExtension}Let $\beta : G_1 \times G_2 \to \mathbb{F}_p^d$ be a bilinear map, let $D = \{(x,y) \in G_1 \times G_2 : \beta(x,y) = 0\}$, and let $\phi : D \to H$ be a bilinear map in the sense of Theorem \ref{highRankBilExt}. Then there is a bilinear variety $B \subset D$ of codimension $O(d^2)$ and a bilinear map $\Phi \colon G_1 \times G_2 \to H$ such that $\phi$ agrees with $\Phi$ on $B$.\end{corollary}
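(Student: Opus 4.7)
The plan is to obtain the corollary by chaining the two theorems quoted above in the natural way. First I apply Theorem 5.2 with parameter $r = 20d$ to the bilinear map $\beta$. This produces subspaces $V_1 \leq G_1$ and $V_2 \leq G_2$, each of codimension at most $rd = 20d^2$, on which the restriction $\beta|_{V_1 \times V_2}$ satisfies the equidistribution hypothesis $\ex_{x \in V_1, y \in V_2} \omega^{\lambda \cdot \beta(x,y)} \leq p^{-r}$ for all non-zero $\lambda$. Since $r = 20d$, this is precisely the hypothesis of Theorem \ref{highRankBilExt} applied to $\beta|_{V_1 \times V_2}$.

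Now let $D' = D \cap (V_1 \times V_2)$. The restriction $\phi|_{D'}$ is bilinear (in the coordinate-wise sense) on the vanishing set of $\beta|_{V_1 \times V_2}$, so by Theorem \ref{highRankBilExt} there exists a bilinear map $\Phi' \colon V_1 \times V_2 \to H$ with $\Phi'(x,y) = \phi(x,y)$ for every $(x,y) \in D'$.

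Next I extend $\Phi'$ to a bilinear map on all of $G_1 \times G_2$. Choose direct complements $G_1 = V_1 \oplus W_1$ and $G_2 = V_2 \oplus W_2$, and define $\Phi(v_1 + w_1,\, v_2 + w_2) = \Phi'(v_1, v_2)$ for $v_i \in V_i$, $w_i \in W_i$. This is manifestly bilinear and agrees with $\Phi'$ on $V_1 \times V_2$.

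Finally, write $V_i = \ker \alpha_i$ for linear maps $\alpha_i \colon G_i \to \mathbb{F}_p^{t_i}$ with $t_i \leq 20d^2$, and set
\[
B = \{(x,y) \in G_1 \times G_2 : \alpha_1(x) = 0,\ \alpha_2(y) = 0,\ \beta(x,y) = 0\}.
\]
Then $B$ is a bilinear variety of codimension at most $t_1 + t_2 + d \leq 40d^2 + d = O(d^2)$, we have $B \subseteq D$ because $\beta$ vanishes on $B$, and $\phi = \Phi$ on $B = D'$ by construction. There is no real obstacle here beyond assembling the two theorems; the only point that needs a brief justification is the extension step from $V_1 \times V_2$ to $G_1 \times G_2$, which is handled by the choice of complements.
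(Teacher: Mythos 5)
Your proof is correct and is exactly what the paper intends by the sentence ``We may combine the two theorems above into a single result'': apply Theorem 5.2 with $r = 20d$ to get $V_1, V_2$ of codimension at most $20d^2$ on which the equidistribution hypothesis of Theorem \ref{highRankBilExt} holds, then apply Theorem \ref{highRankBilExt} to $\beta|_{V_1\times V_2}$ and $\phi|_{D\cap(V_1\times V_2)}$, and finally extend the resulting bilinear map from $V_1\times V_2$ to $G_1\times G_2$ by choosing complements. The only point worth flagging is one you already handle: Theorem \ref{highRankBilExt} applied to the restricted spaces produces a bilinear map on $V_1 \times V_2$ rather than on $G_1 \times G_2$, so the elementary extension step via direct complements is genuinely needed to obtain $\Phi$ in the form the corollary requires.
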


As we have mentioned, bilinear maps defined on a bilinear variety in general cannot in general be extended to global bilinear maps, so Corollary~\ref{bilinearExtension} is best we can hope for in a qualitative sense. For a simple example of a non-extendable map, take the variety $B = \{(x_1, x_2; y_1, y_2) \in \mathbb{F}_p^2 \times \mathbb{F}_p^2 : x_1 y_1 - x_2 y_2 = 0\}$. We may partition $B$ into sets $Z, B_\lambda$, where $\lambda \in\mathbb{F}_p \setminus \{0\}$, defined by
\begin{align*}Z = &\{(0,0;y_1, y_2) : y_1, y_2 \in \mathbb{F}_p\} \cup \{(x_1, x_2; 0,0) : x_1, x_2 \in \mathbb{F}_p\}\\
&\hspace{2cm}\cup \{(0,x_2;y_1, 0) : x_2, y_1 \in \mathbb{F}_p\} \cup \{(x_1,0;0, y_2) : x_1, y_2 \in \mathbb{F}_p\}\end{align*}
and 
\[B_\lambda = \{(\lambda x, x; y, \lambda y) : x,y \in \mathbb{F}_p \setminus \{0\}\}.\]
Let $f: \mathbb{F}_p\setminus \{0\} \to \mathbb{F}_p$ be any map. Define a map $\phi : B \to \mathbb{F}_p$ by $\phi(x_1, x_2; y_1, y_2) = 0$, when $(x_1, x_2; y_1, y_2) \in Z$, and $\phi(x_1, x_2; y_1, y_2) = f(\lambda) x_2y_1$, when $(x_1, x_2; y_1, y_2) \in B_\lambda$, $\lambda \not= 0$. It is easy to check that $\phi$ is a bilinear map on $B$ for any choice of $f$.\\
\indent To see that $\phi$ cannot be extend to a global bilinear map, it suffices to show that the restriction $\psi \colon \{(x,x) : x \in \mathbb{F}_p\} \to \mathbb{F}_p$ defined by $\psi(x,x) = \phi(x, 1; 1, x)$, cannot be extended to a biaffine map on $\mathbb{F}_p\times\mathbb{F}_p$ for some $f$. Observe that $\psi(x,x) = f(x)$ when $x \not= 0$, and $\psi(0,0) = 0$, so there are $p^{p-1}$ different $\psi$ we may get, while there are only $p^4$ biaffine maps on $\mathbb{F}_p \times \mathbb{F}_p$.\\

The aim of this note is to generalize Corollary~\ref{bilinearExtension} to the multivariate case. Let $\mathbb F$ be a finite field, which we shall regard as fixed, and now let $G_1,\dots,G_k$ be vector spaces over $\mathbb F$. We define a \emph{multilinear variety of codimension $d$} in $G_1 \tdt G_k$ to be a set of the form $\{(x_1, \dots, x_k) \in G_1 \tdt G_k : (\forall i \in [d]) \beta_i(x_{I_i}) = 0\}$, where the maps $\beta_i : \prod_{j \in I_i} G_j \to \mathbb{F}$ are multilinear forms for $i \in [d]$. Our main theorem is the following. 

\begin{theorem}\label{mainThm}For each positive integer $k$ there are constants $C_k, D_k$ such that the following statement holds. Let $B$ be a multilinear variety of codimension $d$ in $G_1\tdt G_k$ and let $\phi \colon B \to H$ be a multilinear map to a vector space $H$ over $\mathbb F$. Then, there is a global multilinear map $\Phi : G_1 \tdt G_k \to H$ such that the set $\Big\{(x_1, \dots, x_k) \in B : \Phi(x_1, \dots, x_k) = \phi(x_1, \dots, x_k)\Big\}$ contains a multilinear variety of codimension at most $C_k d^{D_k}$.\end{theorem}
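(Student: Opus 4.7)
I would proceed by induction on $k$, with base case $k = 2$ given by Corollary~\ref{bilinearExtension}. The inductive step would consist of a fiber-wise application of the bilinear extension in the last two coordinates, followed by an appeal to the inductive hypothesis (for $k' < k$) to coordinate the resulting choices multilinearly in the remaining coordinates.

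Concretely, given multilinear $\phi \colon B \to H$ on a multilinear variety $B \subseteq G_1 \tdt G_k$ of codimension $d$, I would first separate the defining forms of $B$ according to whether they involve the coordinate $x_k$. Forms not involving $x_k$ cut out a multilinear variety $B_0 \subseteq G_1 \tdt G_{k-1}$; forms involving $x_k$, being linear in $x_k$, can be written as $\langle \tilde{\beta}_i(x_1,\dots,x_{k-1}), x_k\rangle$ for multilinear maps $\tilde{\beta}_i \colon G_1 \tdt G_{k-1} \to G_k^*$. Fixing $Y = (x_1, \dots, x_{k-2})$ that satisfies the defining forms of $B$ involving only the first $k-2$ coordinates, the slice $B_Y := \{(x_{k-1}, x_k) \colon (Y, x_{k-1}, x_k) \in B\}$ is a bilinear variety in $G_{k-1} \times G_k$ of codimension at most $d$, since each remaining defining form of $B$ becomes at most linear in each of $x_{k-1}, x_k$ once $Y$ is fixed. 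The restriction $\phi(Y, \cdot, \cdot)$ is bilinear on $B_Y$, so Corollary~\ref{bilinearExtension} gives, for each $Y$, a global bilinear $\Psi_Y \colon G_{k-1} \times G_k \to H$ and a bilinear sub-variety $B_Y' \subseteq B_Y$ of codimension $O(d^2)$ on which $\Psi_Y = \phi(Y, \cdot, \cdot)$.

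The central task is then to choose the $\Psi_Y$'s so that $\Phi(Y, x_{k-1}, x_k) := \Psi_Y(x_{k-1}, x_k)$ is multilinear in $Y$, giving a global multilinear $\Phi$. Since Corollary~\ref{bilinearExtension}'s extension is non-canonical, $\Psi_Y$ is determined only up to a bilinear map vanishing on $B_Y'$; a multilinear-in-$Y$ section must be selected from this affine family. I would parameterize the ambiguity by ``correction'' terms of the form $\sum_i \mu_i(Y, x_{k-1}) \tilde{\beta}_i(Y, x_{k-1})(x_k)$ (and analogous terms using linear forms in $x_{k-1}$ alone, $x_k$ alone, and constant forms in $Y$) and then invoke the inductive hypothesis for $k' < k$ on auxiliary multilinear extension problems whose targets are the relevant spaces of linear or bilinear maps. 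This pins down the corrections multilinearly after restricting $Y$ to a further sub-variety of $G_1 \tdt G_{k-2}$ of codimension polynomial in $d$.

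The main obstacle is precisely this coherence step: since Corollary~\ref{bilinearExtension} produces non-canonical extensions, it is not a priori clear that the per-slice choices can be reconciled multilinearly in $Y$. Overcoming this requires opening up the proof of Corollary~\ref{bilinearExtension} to expose its freedom in terms amenable to multilinear parameterization, and then recursively applying the multilinear extension theorem in fewer variables to lock down that freedom. The polynomial codimension bound $C_k d^{D_k}$ arises by compounding the $O(d^2)$ cost of each application of Corollary~\ref{bilinearExtension} with polynomial blow-ups from the inductive applications, so the exponents $D_k$ satisfy a recursion of the form $D_k \leq D_{k-1} + O(1)$ (or similar polynomial recursion), growing with $k$ but remaining polynomial in $d$ for each fixed $k$.
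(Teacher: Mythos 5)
Your proposal takes a genuinely different route from the paper, and I believe the ``coherence step'' you flag is not merely hard but actually unresolved --- it is precisely where the paper introduces a key idea your plan lacks. The paper does not induct on $k$: Proposition~\ref{mainThmIndStep} removes one maximal coordinate-set $S$ at a time from a down-set $\mathcal{F}\subset\mathcal{P}[k]$, splitting the forms supported on $S$ into a structured part (low analytic rank, handled via the multivariate partition-rank bound Theorem~\ref{strongInvThm}, which replaces them by forms on proper subsets of $S$) and a quasirandom part, which is stripped off one form at a time via Theorem~\ref{qrExtension}. The essence of Theorem~\ref{qrExtension} is \emph{uniqueness}: if $\rho$ is sufficiently quasirandom relative to the remaining forms, a multilinear $\phi$ on $B^0=B\cap\{\rho=0\}$ extends to a multilinear map on $B$ in exactly one way once its value at one point $z_{[k]}$ is fixed. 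That uniqueness is what makes the slice-wise extensions $\theta_{x_{[k]\setminus S}}$ automatically coherent --- that is, multilinear in the $[k]\setminus S$ directions --- with no choices left to reconcile.

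Your plan, by slicing in the last two coordinates and applying Corollary~\ref{bilinearExtension} fiber-wise, produces non-canonical $\Psi_Y$ and agreement sets $B_Y'$ for each $Y$, and the claimed remedy --- ``parameterize the ambiguity and invoke the inductive hypothesis'' --- presupposes the very structure that needs to be established. The admissible $\Psi_Y$ form an affine coset of bilinear maps vanishing on $B_Y'$, but $B_Y'$ is manufactured inside the proof of Corollary~\ref{bilinearExtension} by a $Y$-dependent regularization that has no multilinear (or even controlled) dependence on $Y$; there is no reason a multilinear section $Y\mapsto\Psi_Y$ exists, and the inductive hypothesis cannot be applied to $Y\mapsto\Psi_Y$ until that map is at least partially defined as a multilinear map on some variety, which is exactly what is missing. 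Separately, even granting coherent $\Psi_Y$, the set $\bigcup_Y\{Y\}\times B_Y'$ need not be a multilinear variety, since the defining forms of $B_Y'$ vary with $Y$ in an uncontrolled way. Finally, the recursion $D_k\le D_{k-1}+O(1)$ is not supported: each fiber-wise application of Corollary~\ref{bilinearExtension} squares the codimension, so if the coherence step is itself a codimension-$d$ extension problem one would expect the exponents to multiply, not increase by a constant. The paper sidesteps this compounding precisely because its induction is over coordinate-\emph{sets} rather than over $k$, and each of the $2^k$ steps increases the codimension only polynomially in the current codimension $m$, with the structured forms on $S$ disposed of in a single stroke via Theorem~\ref{strongInvThm}.
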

\noindent Note that the constants $C_k$ and $D_k$ do not depend on the cardinality of $\mathbb F$. 

This result relies crucially on power-type bounds for partition rank in terms of analytic rank, which were independently proved by Janzer~\cite{JanzerRank} and the second author~\cite{LukaRank}. (The relevant definitions and a precise statement of the result will be given at the end of \S 2.) Let us also note that Kazhdan and Ziegler generalized Theorem~\ref{highRankBilExt} in~\cite{KazhdanZiegler1}, but their result, like Theorem~\ref{highRankBilExt}, has the crucial assumption that the domain of the given map is a variety of high rank. However, in higher dimensions, finding a high rank subvariety inside the given variety leads to significantly worse bounds than those in Theorem~\ref{mainThm}.

\noindent\textbf{Acknowledgements.} The second author would like to acknowledge the support of the Ministry of Education, Science and Technological Development of the Republic of Serbia, Grant ON174026.

\boldSection{Preliminaries}
Let $\mathbf{f} = |\mathbb{F}|$. We recall the following notational conventions, definitions and proposition from~\cite{LukaRank}.\\

\noindent\textbf{Notation.}\ In the rest of the paper, we use the following abbreviations in situations where we have many indices appearing in predictable patterns. Given a sequence $x_1, \dots, x_m$, we shall denote it by $x_{[m]}$, and more generally if $I\subset[m]$ then we shall write $x_I$ for the subsequence with indices that run through $I$. We shall do the same for products of the spaces $G_i$ as well: $G_{[k]}$ will stand for $\prod_{i \in [k]} G_i$ and $G_I$ for $\prod_{i \in I} G_i$. For example, instead of writing $\alpha \colon \prod_{i \in I} G_i \to \mathbb{F}$ and $\alpha(x_i \colon i \in I)$, we write $\alpha \colon G_I \to \mathbb{F}$ and $\alpha(x_I)$. Also, we refer to the zero set of a multiaffine map $\alpha \colon G_{[k]} \to H$, where $H$ is a vector space over $\mathbb{F}$, as a \emph{variety}, and the codimension of a variety is $\dim H$. Another convention we adopt is that we write $\ex_x$, without specifying the set from which $x$ is taken, when this causes no confusion. Frequently we shall consider `slices' of sets $S\subset G_{[k]}$, by which we mean sets $S_{x_I} = \{y_{[k]\setminus I} \in G_{[k] \setminus I} \colon (x_I, y_{[k] \setminus I}) \in S\}$, for $I \subset [k], x_I \in G_I$. (Here $(x_I,y_{[k]\setminus I})$ denotes not the concatenation of the two sequences but the sequence $w_{[k]}$, where $w_i=x_i$ when $i\in I$ and $w_i=y_i$ when $i\in [k]\setminus I$.) Occasionally, we might have a single element $z \in G_i$ instead of $x_I$, and in this case we write $S_{i \colon z}$ for the resulting slice, since the direction $i$ is not clear from the notation $z$, unlike in the case of $x_I$. In other words, $S_{i:z}$ is the set $\{y_{[k]\setminus\{i\}}:(z,y_{[k]\setminus\{i\}})\in S\}$ (with a similar interpretation of $(z,y_{[k]\setminus\{i\}})$). Finally, for each vector space $G_i$, fix a dot product. We need this for the characterization of linear forms on $G_i$ -- each linear form $\phi \colon G_i \to \mathbb{F}$ takes the form $\phi(x) = x \cdot u$ for some element $u \in G_i$.\\

Define a graph $\mathcal{G}$ with vertex set $G_{[k]}$ by putting edges between points that differ in a single coordinate. We say that a set $S \subset G_{[k]}$ is \emph{connected} if the induced graph $\mathcal{G}[S]$ is connected. The \emph{diameter} of $S$ is the largest distance between two vertices in the graph $\mathcal{G}[S]$. In the rest of the paper, we fix a non-trivial multiplicative character $\chi \colon \mathbb{F}\to\mathbb{C}$.

\begin{proposition}[One-sided regularity lemma~\cite{LukaRank}]\label{nonzeroConnIntro}Write $c_k = 4(k+1)$. Let $\rho \colon G_{[k]} \to \mathbb{F}$ and $\beta_i \colon G_{I_i} \to \mathbb{F}$ ($i=1,2,\dots,r$) be multilinear maps. Let $\mathcal{I} = \{i \in [r] \colon I_i = [k]\}$. Suppose that
\[\ex_{x_1, \dots, x_k} \chi\Big(\rho(x_{[k]}) - \sum\limits_{i \in \mathcal{I}} \lambda_i \beta_i(x_{[k]})\Big) \leq \eta = \mathbf{f}^{-c_k(r+1)},\]
for any choice of $\lambda \in \mathbb{F}^{\mathcal{I}}$. Then the set of $x_{[k]} \in G_{[k]}$ for which $\rho(x_{[k]})\ne 0$ and $\beta_i(x_{I_i})=0$ for $i=1,2,\dots,r$ is connected and has diameter at most $(2k+1)(2^k-1)$.
\end{proposition}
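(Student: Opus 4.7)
My plan is to prove Proposition~\ref{nonzeroConnIntro} by induction on $k$, using character-sum arguments to control the density and structure of $S = \{x_{[k]} : \rho(x_{[k]}) \neq 0,\ \beta_i(x_{I_i}) = 0 \text{ for } i \in [r]\}$. For the base case $k = 1$, one has $\mathcal{I} = [r]$, and the hypothesis together with character orthogonality forces $\rho \notin \mathrm{span}(\beta_1, \dots, \beta_r)$, so $\bigcap_i \ker \beta_i \not\subseteq \ker \rho$ and $S \neq \emptyset$; the graph $\mathcal{G}$ on $G_1$ is a clique, hence $S$ is trivially connected with diameter at most $1$.

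For the inductive step, the key technical lemma is a density/pseudorandomness statement. Expanding $\mathbbm{1}[\rho \neq 0] \prod_i \mathbbm{1}[\beta_i = 0]$ via character orthogonality yields a sum over $(\mu, \lambda) \in \mathbb{F} \times \mathbb{F}^r$ of biases $\ex_x \chi(\mu \rho(x) + \sum_i \lambda_i \beta_i(x_{I_i}))$. The hypothesis directly controls the terms with $\mu = 1$ and $\lambda$ supported on $\mathcal{I}$, and by reparameterizing $\lambda \mapsto \lambda/\mu$ (equivalently, by replacing $\chi$ with the nontrivial character $\chi_\mu$) one also bounds the $\mu \neq 0$ analogues. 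For the remaining terms, which have $\lambda_i \neq 0$ for some $i \notin \mathcal{I}$, pick $j \notin I_i$: since $\rho$ and the other $\beta_{i'}$ are (multi)linear in $x_j$ whereas $\beta_i$ itself is constant in $x_j$, averaging over $x_j$ first reduces the relevant character sum to $\chi(b)\mathbbm{1}[a = 0]$ for lower-valence multilinear forms $a, b$, after which one iterates. Combining all contributions yields $|S|/|G_{[k]}| \gtrsim \mathbf{f}^{-O(r)}$, and by the same calculation applied slice-wise together with Markov, for most $z \in G_j$ both $|S_{j:z}|$ is large and the restricted problem on $G_{[k] \setminus \{j\}}$ satisfies the $(k-1)$-variable analogue of the hypothesis with $\eta' = \mathbf{f}^{-c_{k-1}(r+1)}$.

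Given these density and inheritance facts, connectedness and the diameter bound follow by a recursive path construction. For $a, b \in S$, locate a direction $j$ and a value $z \in G_j$ for which $(a_{[k] \setminus \{j\}}, z),\ (b_{[k] \setminus \{j\}}, z) \in S$ and the slice $S_{j:z}$ is good in the above sense; each of these is adjacent to $a$ (respectively $b$) via a single edge in $\mathcal{G}[S]$, and the inductive hypothesis inside the slice connects them in at most $(2k-1)(2^{k-1}-1)$ steps. When the joint-$z$ argument is obstructed (e.g.\ when no single $z$ simultaneously satisfies all slice constraints coming from both endpoints), iterate through a bounded number of intermediate slice transitions, each costing an additional $O(D_{k-1})$ steps, yielding a recursion compatible with the stated exponential diameter bound.

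The main obstacle is the inheritance step within the density lemma: fixing $x_j = z$ strictly enlarges the set of fully multilinear indices from $\mathcal{I}$ to $\mathcal{I}' = \{i : I_i \supseteq [k] \setminus \{j\}\}$, so the $(k-1)$-variable hypothesis demands bias bounds the original hypothesis does not directly provide. The coordinate-factoring argument above bridges this gap, with the exponent $c_k = 4(k+1)$ in $\eta$ providing precisely the slack needed for the induction to close across the $r+1$ tiers of constraints.
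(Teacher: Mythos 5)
This proposition is not proved in the paper at all: it is quoted verbatim from~\cite{LukaRank}, so there is no internal proof to compare yours against, and I am judging your sketch on its own terms. The preparatory parts are sound. The base case is fine, the density lower bound $|S|/|G_{[k]}|\gtrsim \mathbf f^{-O_k(r)}$ follows as you say, and the slice-inheritance step that you flag as the ``main obstacle'' does close by the route you indicate: for fixed $z\in G_j$ the slice bias $\ex_{x_{[k]\setminus\{j\}}}\chi\bigl(\rho(x,z)-\sum_{i\in\mathcal I'}\lambda_i\beta_i(\cdots)\bigr)$ is the bias of a multilinear form and hence a nonnegative real, its average over $z$ is the bias of a multiaffine form on $G_{[k]}$ whose multilinear part is $\rho-\sum_{i\in\mathcal I}\lambda_i\beta_i$, so Lemma~\ref{mlBias} bounds that average by $\eta$, and Markov plus a union bound over the at most $\mathbf f^{r}$ relevant $\lambda$ leaves a bad set of $z$ of density at most $\mathbf f^{r-4(r+1)}$; the gap $c_k-c_{k-1}=4$ is exactly the slack this needs.

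The genuine gap is the connectivity step, which is the entire content of the lemma. For $a,b\in S$ and a direction $j$, the admissible $z$ form the set $(V_a\cap V_b)\setminus\bigl(\ker\rho(a_{[k]\setminus\{j\}},\cdot)\cup\ker\rho(b_{[k]\setminus\{j\}},\cdot)\bigr)$, where $V_a,V_b\leq G_j$ are cut out by the linear conditions $\beta_i((a,z)_{I_i})=0$, resp.\ for $b$. Each of $V_a$, $V_b$ separately meets the relevant non-kernel (witnessed by $z=a_j$, resp.\ $z=b_j$), but nothing in the hypothesis prevents $\rho(a_{[k]\setminus\{j\}},\cdot)$ from vanishing identically on $V_a\cap V_b$: already for $k=2$, $r=1$ one can have $\rho(a_1,\cdot)\in\spn\{\beta_1(a_1,\cdot),\beta_1(b_1,\cdot)\}$ for the particular pair $a_1,b_1$ at hand, since the quasirandomness hypothesis is an averaged statement and controls no individual slice. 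So the joint-$z$ step can genuinely fail, and your fallback --- ``iterate through a bounded number of intermediate slice transitions'' --- is a description of what a proof would have to accomplish, not an argument: you do not say how the intermediate points are produced, why the obstruction cannot recur, or how the number of transitions is bounded. A telling sanity check: if your two-edges-plus-one-slice construction worked at every level of the induction, the diameter would satisfy $D_k\leq D_{k-1}+2$ and hence be linear in $k$, whereas the stated bound $(2k+1)(2^k-1)$ is exponential; the exponential loss in the true statement is precisely the price of repeatedly manoeuvring around the obstruction you have identified but not resolved.
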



\begin{corollary}\label{layerPath}Let $\rho, \beta_1,\dots,\beta_r$ be as in Proposition~\ref{nonzeroConnIntro}. Let $x_{[k]}, y_{[k]} \in G_{[k]}$ be such that $\rho(x_{[k]}), \rho(y_{[k]}) \not = 0$ and $\beta_i(x_{I_i}) = \beta_i(y_{I_i}) = 0$ for all $i \in [r]$. Then, there are points $q^0_{[k]}, q^1_{[k]}, \dots, q^s_{[k]} \in G_{[k]}$ with the following properties.
\begin{itemize}
\item Any two consecutive points differ in exactly one coordinate.
\item The first point $q^0_{[k]}$ is equal to $x_{[k]}$, and the last point $q^s_{[k]}$ is equal to $(\lambda_1 y_1, \dots, \lambda_k y_k)$, for some non-zero $\lambda_1, \dots, \lambda_k \in \mathbb{F}$.
\item The number $s$ is at most $(2k+1)(2^k-1)$.
\item We have $\rho(q^0_{[k]}) = \rho(q^1_{[k]}) = \dots = \rho(q^s_{[k]})$ and $\beta_j(q^i_{I_j}) = 0$ for all $i \in [0,s], j \in [r]$. 
\end{itemize}
\end{corollary}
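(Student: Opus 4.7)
The plan is to apply Proposition~\ref{nonzeroConnIntro} to the set $S = \{z_{[k]} \in G_{[k]} : \rho(z_{[k]}) \not = 0 \text{ and } \beta_i(z_{I_i}) = 0 \text{ for all } i \in [r]\}$ to obtain a path in $\mathcal{G}[S]$ from $x_{[k]}$ to $y_{[k]}$, and then to rescale coordinates step by step so that $\rho$ takes the constant value $\rho(x_{[k]})$ along the new path. By the proposition, $S$ is connected and has diameter at most $(2k+1)(2^k-1)$, so there is a sequence $p^0 = x_{[k]}, p^1, \dots, p^s = y_{[k]} \in S$ with $s \leq (2k+1)(2^k-1)$ such that consecutive points differ in exactly one coordinate. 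Writing $j_l \in [k]$ for the coordinate in which $p^{l-1}$ and $p^l$ differ, I will define scalars $f^j_l \in \mathbb{F} \setminus \{0\}$ and set $q^l_j = f^j_l \cdot p^l_j$.

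I would start from $f^j_0 = 1$ for all $j$, and inductively keep $f^j_l = f^j_{l-1}$ for $j \not = j_l$, while choosing $f^{j_l}_l$ so that $\prod_{j \in [k]} f^j_l = \rho(x_{[k]})/\rho(p^l)$. This yields $f^{j_l}_l = f^{j_l}_{l-1} \cdot \rho(p^{l-1})/\rho(p^l)$, which is non-zero because $\rho$ stays non-zero along the original path. By the multilinearity of $\rho$ and each $\beta_i$, the rescaling then gives $\rho(q^l) = \prod_{j \in [k]} f^j_l \cdot \rho(p^l) = \rho(x_{[k]})$ and $\beta_i(q^l_{I_i}) = \prod_{j \in I_i} f^j_l \cdot \beta_i(p^l_{I_i}) = 0$ for all $i$ and $l$. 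Consecutive points $q^{l-1}, q^l$ agree in every coordinate $j \not = j_l$ (since $f^j_l = f^j_{l-1}$ and $p^{l-1}_j = p^l_j$), so they differ in at most one coordinate; if they coincide in a particular step I simply delete the repeated vertex, which only decreases $s$. The final point is $q^s = (f^1_s y_1, \dots, f^k_s y_k)$, which is of the required form with each $\lambda_j = f^j_s \not = 0$.

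I do not anticipate any substantial obstacle. The key observation is that multilinearity makes the rescaling of a single coordinate act as scalar multiplication on the values of $\rho$ and of each $\beta_i$, so one free parameter per step suffices to restore the value of $\rho$ to $\rho(x_{[k]})$ without ever violating the constraints $\beta_i = 0$ and without introducing zero scalars.
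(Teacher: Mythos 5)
Your proof is correct and follows essentially the same approach as the paper: both obtain a path from Proposition~\ref{nonzeroConnIntro}, then rescale one coordinate at each step so that $\rho$ becomes constant along the path, relying on multilinearity to preserve the $\beta_i = 0$ constraints, and finally delete any repeated vertices. The only cosmetic difference is that you track cumulative per-coordinate scaling factors $f^j_l$, whereas the paper phrases the same rescaling as an induction that at step $t$ multiplies one coordinate of all the remaining points $p^{t+1},\dots,p^s$ by a single scalar.
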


\begin{proof}By Proposition~\ref{nonzeroConnIntro} the set $\{x_{[k]} \in G_{[k]} \colon (\forall i \in [r])\beta_i(x_{I_i}) = 0, \rho(x_{[k]}) \not= 0\}$ is connected and of diameter at most $(2k+1)(2^k-1)$. Hence, there is a sequence $q^0_{[k]}, q^1_{[k]}, \dots, q^s_{[k]} \in G_{[k]}$ that satisfies the first three of the listed properties, $\rho(q^0_{[k]}), \dots, \rho(q^s_{[k]}) \not= 0$ and $\beta_j(q^i_{I_j}) = 0$ for all $i \in [0,s], j \in [r]$. By induction on $t \in [0,s]$, we show that there is a sequence $p^0_{[k]}, p^1_{[k]}, \dots, p^s_{[k]} \in G_{[k]}$ that satisfies the first three of the listed properties, where we relax the first property to allow consecutive points to be equal, and that also satisfies a modified version of the last property, namely that $\rho(p^0_{[k]}) = \rho(p^1_{[k]}) = \dots = \rho(p^t_{[k]}) \not=0$, $\rho(p^{t+1}_{[k]}), \dots, \rho(p^s_{[k]}) \not= 0$, and $\beta_j(p^i_{I_j}) = 0$ for all $i \in [0,s], j \in [r]$. For $t = 0$, we may take $p^i_{[k]} = q^i_{[k]}$. Assume now that the claim holds for some $t < s$, and let $p^0_{[k]}, \dots, p^s_{[k]}$ be the sequence so far. Then, points $p^t_{[k]}$ and $p^{t+1}_{[k]}$ differ in a single coordinate, say $c \in [k]$. Let $\lambda \in \mathbb{F} \setminus \{0\}$ be such that $\rho(p^t_{[k]}) = \lambda \rho(p^{t+1}_{[k]})$. Modify all points $p^{t+1}_{[k]}, \dots, p^{s}_{[k]}$ by multiplying their $c$-coordinate by $\lambda$. It is easy to check that the modified sequence satisfies all the properties.\\
\indent Once we have a sequence for $t =s$, remove points that are equal to their predecessor to finish the proof.\end{proof}

We shall also need to know that the set considered in the results above is necessarily non-empty. To prove this we need two simple lemmas. 

\begin{lemma}[Lemma 11~\cite{LukaRank}]\label{varSize}Let $B \subset G_{[k]}$ be a non-empty variety of codimension $d$. Then $|B| \geq \mathbf{f}^{-kd} |G_{[k]}|$.\end{lemma}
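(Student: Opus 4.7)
The plan is to prove this by induction on $k$, using the multiaffine structure to reduce each step to the one-variable case.

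For the base case $k=1$, an affine map $\alpha \colon G_1 \to H$ with $\dim H = d$ can be written as $\alpha(x) = L(x) + c$ for some linear $L$ and constant $c$. If its zero set $B$ is non-empty, then $B$ is a coset of $\ker L$, so $|B| = |G_1|/|\img L| \geq \mathbf{f}^{-d}|G_1|$ since $\img L \subseteq H$.

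For the inductive step, suppose the bound holds for $k-1$ and let $B = \alpha^{-1}(0)$ for a multiaffine $\alpha \colon G_{[k]} \to H$ with $\dim H = d$. Fix any point $(x_1^0,\dots,x_k^0)\in B$. The idea is to split $B$ into slices over the last coordinate and to bound both the size of each slice and the number of non-empty slices.

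For each $x_k \in G_k$, the slice $B_{k:x_k}$ is the zero set of the multiaffine map $\alpha(\cdot,x_k)\colon G_{[k-1]}\to H$, which has codimension at most $d$. Whenever $B_{k:x_k}$ is non-empty, the induction hypothesis gives $|B_{k:x_k}| \geq \mathbf{f}^{-(k-1)d}|G_{[k-1]}|$. To control the number of such $x_k$, I would use the base case applied to the affine map $\gamma \colon G_k \to H$ defined by $\gamma(x_k) = \alpha(x_1^0,\dots,x_{k-1}^0,x_k)$; since $\gamma(x_k^0)=0$, its zero set is non-empty, so $|\gamma^{-1}(0)| \geq \mathbf{f}^{-d}|G_k|$. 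Each $x_k \in \gamma^{-1}(0)$ witnesses $(x_1^0,\dots,x_{k-1}^0) \in B_{k:x_k}$, hence $B_{k:x_k}\ne\emptyset$. Combining,
\[|B| \geq \sum_{x_k\in\gamma^{-1}(0)} |B_{k:x_k}| \geq \mathbf{f}^{-d}|G_k|\cdot \mathbf{f}^{-(k-1)d}|G_{[k-1]}| = \mathbf{f}^{-kd}|G_{[k]}|,\]
completing the induction.

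There is no real obstacle: the only subtle point is ensuring that each of the slices considered is actually non-empty before applying the inductive hypothesis, which is why one picks a specific point of $B$ and uses it to exhibit many non-empty slices via the affine map $\gamma$.
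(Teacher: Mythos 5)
Your proof is correct. The base case is the standard fact that a non-empty fibre of an affine map is a coset of the kernel of its linear part, whose size is at least $|G_1|/|H| \geq \mathbf{f}^{-d}|G_1|$. The inductive step correctly identifies the key point: one must exhibit enough non-empty slices before applying the inductive hypothesis, and fixing a base point $x^0_{[k]} \in B$ and looking at the affine map $\gamma(x_k) = \alpha(x^0_{[k-1]}, x_k)$ achieves exactly that, since every $x_k \in \gamma^{-1}(0)$ has $x^0_{[k-1]} \in B_{k:x_k}$. The accounting $|B| = \sum_{x_k}|B_{k:x_k}| \geq |\gamma^{-1}(0)| \cdot \mathbf{f}^{-(k-1)d}|G_{[k-1]}| \geq \mathbf{f}^{-kd}|G_{[k]}|$ is right. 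Note that the paper itself does not prove this lemma but cites it as Lemma 11 of~\cite{LukaRank}; your inductive argument via slicing off one coordinate and using an affine section to certify non-emptiness is essentially the same argument used there, so this matches the source.
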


\begin{lemma}[Lovett, Lemma 2.1~\cite{Lovett}]\label{mlBias}Suppose that $\alpha \colon G_{[k]} \to \mathbb{F}$ is a multiaffine form with multilinear part $\alpha^{\text{lin}}$. Then
\[\Big|\ex_{x_{[k]}} \chi(\alpha(x_{[k]}))\Big| \leq \ex_{x_{[k]}} \chi(\alpha^{\text{lin}}(x_{[k]})).\]
\end{lemma}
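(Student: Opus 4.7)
The plan is to prove the lemma by induction on $k$. The base case $k=1$ is direct: an affine form $\alpha(x_1) = \ell(x_1) + c$ with linear part $\ell$ satisfies $\ex_{x_1} \chi(\alpha(x_1)) = \chi(c)\,\mathbbm{1}[\ell \equiv 0]$, so both $|\ex \chi(\alpha)|$ and $\ex \chi(\alpha^{\text{lin}})$ equal $\mathbbm{1}[\ell \equiv 0]$.

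For the inductive step, I would isolate the dependence on $x_k$. Decomposing $\alpha$ into its multilinear components $\alpha = \sum_{I \subseteq [k]} \alpha_I$ and grouping according to whether $k \in I$, one writes $\alpha(x_{[k]}) = f(x_{[k-1]}) + g(x_{[k-1]}) \cdot x_k$, where $f$ is multiaffine on $G_{[k-1]}$ and $g \colon G_{[k-1]} \to G_k$ is multiaffine, with multilinear part $h$ characterised by $h(x_{[k-1]}) \cdot x_k = \alpha^{\text{lin}}(x_{[k]})$. Averaging over $x_k$ first and using character orthogonality on $G_k$,
\[\ex_{x_{[k]}} \chi(\alpha(x_{[k]})) = \ex_{x_{[k-1]}} \chi(f(x_{[k-1]}))\,\mathbbm{1}\big[g(x_{[k-1]}) = 0\big],\]
while the same reasoning applied to $\alpha^{\text{lin}}$ yields $\ex \chi(\alpha^{\text{lin}}) = \Pr_{x_{[k-1]}}\big[h(x_{[k-1]}) = 0\big]$. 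By the triangle inequality, it therefore suffices to show $\Pr[g \equiv 0] \leq \Pr[h \equiv 0]$.

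To compare these two probabilities, fix a basis $e_1, \dots, e_m$ of $G_k$ and set $g_i(x_{[k-1]}) = g(x_{[k-1]}) \cdot e_i$ and $h_i(x_{[k-1]}) = h(x_{[k-1]}) \cdot e_i$. Each $g_i$ is a multiaffine form on $G_{[k-1]}$ whose multilinear part is $h_i$. Fourier inversion over $\mathbb{F}$ gives
\[\Pr\big[g \equiv 0\big] = \ex_{\lambda \in \mathbb{F}^m} \ex_{x_{[k-1]}} \chi\Big(\sum_{i=1}^m \lambda_i g_i(x_{[k-1]})\Big),\]
and the analogous identity for $h$. For each fixed $\lambda$, the form $\sum_i \lambda_i g_i$ is multiaffine on $G_{[k-1]}$ with multilinear part $\sum_i \lambda_i h_i$, so the inductive hypothesis applies to it. Combined with the non-negativity of $\Pr[g \equiv 0]$, this yields
\[\Pr[g \equiv 0] \leq \ex_\lambda \Big|\ex_{x_{[k-1]}} \chi\big(\textstyle\sum_i \lambda_i g_i\big)\Big| \leq \ex_\lambda \ex_{x_{[k-1]}} \chi\big(\textstyle\sum_i \lambda_i h_i\big) = \Pr[h \equiv 0],\]
completing the induction.

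The one point requiring care is the legitimacy of the final display: it relies on the fact that $\ex \chi(L)$ of any multilinear form $L$ on some $G_J$ is a non-negative real, which follows by the same slicing-and-orthogonality argument used above (it equals the probability that a certain linear slice of $L$ vanishes identically). Once this is in hand, the absolute values in the inductive bound may be absorbed as we pass through the outer average in $\lambda$ without loss, and the rest is routine bookkeeping in separating the $x_k$-dependence.
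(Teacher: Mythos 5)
Your proof is correct. The paper itself gives no argument for this lemma --- it is imported verbatim from Lovett's paper \cite{Lovett} --- and your induction on $k$, peeling off the last coordinate via character orthogonality and then comparing $\Pr_{x_{[k-1]}}[g(x_{[k-1]})=0]$ with $\Pr_{x_{[k-1]}}[h(x_{[k-1]})=0]$ by Fourier expansion over $G_k$ and the inductive hypothesis, is exactly the standard proof of this fact; the auxiliary observation that $\ex \chi(L)\geq 0$ for multilinear $L$ is indeed the one point that needs to be stated, and you handle it correctly.
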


To save space, given multilinear forms $\beta_1, \dots, \beta_r$ and $\lambda \in \mathbb{F}^r$, we shall write $\lambda \cdot \beta$ for the multilinear form $\sum_{i \in [r]} \lambda_i \beta_i$.

\begin{lemma}\label{nonzeroPtLemma}Let $\rho, \beta_1,\dots,\beta_r \colon G_{[k]} \to \mathbb{F}$ be multilinear forms and let $m \in \mathbb{N}$ be such that for all choices of $\lambda \in \mathbb{F}^r$,
\[\ex_{x_{[k]}} \chi\Big(\rho(x_{[k]}) + (\lambda \cdot \beta)(x_{[k]})\Big) < \mathbf{f}^{- k (r + m)}.\]
Then for any multilinear forms $\gamma_i \colon G_{I_i} \to \mathbb{F}$, $\emptyset \not= I_i \subsetneq [k]$, $i=1,2,\dots,m$, we may find $x_{[k]} \in G_{[k]}$ such that
\begin{itemize}
\item $\rho(x_{[k]}) = 1$,
\item $(\forall i \in [r])\  \beta_i(x_{[k]}) = 0$, and
\item $(\forall i \in [m])\  \gamma_i(x_{I_i}) = 0$.
\end{itemize}
\end{lemma}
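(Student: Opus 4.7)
The plan is to count the number of $x_{[k]}$ satisfying $\beta_i(x) = 0$, $\gamma_j(x_{I_j}) = 0$, and the weaker condition $\rho(x) \neq 0$ (rather than $\rho(x) = 1$), via a Fourier argument, and then to upgrade $\rho \neq 0$ to $\rho = 1$ by rescaling. For the rescaling, if $x^* \in G_{[k]}$ satisfies the vanishing conditions with $\rho(x^*) = t \neq 0$, then replacing $x^*_1$ by $t^{-1} x^*_1$ yields a point at which $\rho$ equals $1$ by multilinearity of $\rho$ in the first coordinate, and this preserves $\beta_i = 0$ (by multilinearity of $\beta_i$) and each condition $\gamma_j(x_{I_j}) = 0$ (when $1 \in I_j$ by multilinearity of $\gamma_j$, and when $1 \notin I_j$ because the relevant coordinates are unchanged).

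For the counting, let $S = \{x_{[k]} : (\forall i)\ \beta_i(x_{[k]}) = 0,\ (\forall j)\ \gamma_j(x_{I_j}) = 0\}$ and $M = |\{x \in S : \rho(x) \neq 0\}|$. Applying the identities $\mathbb{1}[\rho(x) \neq 0] = (\mathbf{f}-1)/\mathbf{f} - \mathbf{f}^{-1}\sum_{\nu \neq 0}\chi(\nu\rho(x))$ and the standard Fourier expansion of $\mathbb{1}_S$ yields
\[M = \frac{\mathbf{f}-1}{\mathbf{f}}|S| - \frac{|G_{[k]}|}{\mathbf{f}^{r+m+1}}\sum_{\nu \in \mathbb{F}\setminus\{0\},\ \lambda \in \mathbb{F}^r,\ \mu \in \mathbb{F}^m}\ex_{x_{[k]}}\chi\Big(\nu\rho(x_{[k]}) + (\lambda\cdot\beta)(x_{[k]}) + \sum_{j=1}^{m}\mu_j\gamma_j(x_{I_j})\Big).\]
Since each $\gamma_j$ has $I_j \subsetneq [k]$ and is therefore constant in the coordinates outside $I_j$, the expression $\nu\rho + \lambda\cdot\beta + \sum_j\mu_j\gamma_j$, viewed as a multiaffine form on $G_{[k]}$, has multilinear part equal to $\nu\rho + \lambda\cdot\beta$. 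Lemma~\ref{mlBias} bounds the modulus of each error summand by $\ex_x\chi(\nu\rho + \lambda\cdot\beta)$, and since $\nu \neq 0$, substituting $x_1 \leftarrow \nu^{-1}x_1$ converts this into $\ex_x\chi(\rho + \nu^{-1}\lambda\cdot\beta) < \mathbf{f}^{-k(r+m)}$ by hypothesis. Summing the strict inequalities over the $(\mathbf{f}-1)\mathbf{f}^{r+m}$ triples $(\nu, \lambda, \mu)$ gives a strict modulus bound of $\frac{\mathbf{f}-1}{\mathbf{f}}|G_{[k]}|\mathbf{f}^{-k(r+m)}$ on the error term.

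It remains to lower-bound $|S|$. Since all multilinear forms vanish at the origin, $0 \in S$, so $S$ is a non-empty variety of codimension at most $r + m$, and Lemma~\ref{varSize} gives $|S| \geq \mathbf{f}^{-k(r+m)}|G_{[k]}|$. Combining,
\[M > \frac{\mathbf{f}-1}{\mathbf{f}}\big(|S| - \mathbf{f}^{-k(r+m)}|G_{[k]}|\big) \geq 0,\]
so $M > 0$ and the required point exists. The main delicate point is the razor-thin margin in this concluding step: the main term and the error are both of order $|G_{[k]}|\mathbf{f}^{-k(r+m)}$, and strict positivity of $M$ relies on combining the strict hypothesis ($<$) with the non-strict variety-size bound ($\geq$), an interplay that just barely closes the gap.
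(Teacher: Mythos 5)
Your proof is correct and rests on exactly the same ingredients as the paper's: Fourier expansion of indicators, Lemma~\ref{varSize} to lower-bound $|S|$, Lemma~\ref{mlBias} to pass to the multilinear part, and the hypothesis to kill the bias terms. The organization differs slightly. The paper argues by contradiction: it assumes $\rho$ vanishes identically on $S$, so that $\chi(\rho(x))=1$ on $S$ and therefore the density of $S$ equals $\ex_{x}\chi(\rho(x))\mathbbm{1}_S(x)$; this lets it keep $\rho$ with the fixed coefficient $1$ and avoid any sum over $\nu$. Your direct count of $\{x\in S:\rho(x)\neq 0\}$ carries an extra sum over $\nu\in\mathbb{F}\setminus\{0\}$, which you correctly absorb via the substitution $x_1\mapsto\nu^{-1}x_1$, and the closing margin (strict hypothesis bound versus non-strict size bound) is handled the same way in both arguments. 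One point where your write-up is in fact more complete than the paper's is the opening rescaling step: the paper's contradiction hypothesis is that $\rho\equiv 0$ on the variety, which is not literally the negation of ``there is a point with $\rho=1$'' and tacitly presupposes exactly the observation you make explicit, namely that scaling one coordinate of a point with $\rho(x)=t\ne 0$ by $t^{-1}$ preserves all the $\beta_i$ and $\gamma_j$ constraints and normalizes $\rho$ to $1$.
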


\begin{proof}Suppose that, on the contrary, whenever a point $x_{[k]}$ satisfies $\beta_i(x_{[k]}) = 0$ for all $i \in [r]$ and $\gamma_i(x_{I_i}) = 0$ for all $i \in [m]$, then $\rho(x_{[k]}) = 0$. The set of such points is a Bohr variety of codimension at most $k+m$, so by Lemma~\ref{varSize},
\begin{align*}\mathbf{f}^{-k(r+m)}\leq&\ex_{x_{[k]}} \bm{1}\Big((\forall i \in [r])\ \beta_i(x_{[k]}) = 0\ \land\ (\forall i \in [m])\ \gamma_i(x_{I_i}) = 0\Big)\\
 = &\ex_{x_{[k]}} \chi(\rho(x_{[k]})) \bm{1}\Big((\forall i \in [r])\ \beta_i(x_{[k]}) = 0\ \land\ (\forall i \in [m])\ \gamma_i(x_{I_i}) = 0\Big)\\
=&\ex_{x_{[k]}}\, \ex_{\lambda \in \mathbb{F}^r,\,\mu \in \mathbb{F}^m} \chi\Big(\rho(x_{[k]}) + (\lambda \cdot \beta)(x_{[k]}) + \sum_{i \in [m]} \mu_i \gamma_i(x_{I_i})\Big)\\
\leq& \ex_{\lambda \in \mathbb{F}^r,\, \mu \in \mathbb{F}^m}\Big|\ex_{x_{[k]}} \chi\Big(\rho(x_{[k]}) + (\lambda \cdot \beta)(x_{[k]}) + \sum_{i \in [m]} \mu_i \gamma_i(x_{I_i})\Big)\Big|.\\
\end{align*}
By Lemma~\ref{mlBias}, this is at most $\ex_{\lambda \in \mathbb{F}^r}\Big|\ex_{x_{[k]}} \chi\Big(\rho(x_{[k]}) + (\lambda \cdot \beta)(x_{[k]})\Big)\Big|$, which by hypothesis is less than $\mathbf{f}^{- k (r + m)}$. This is a contradiction, so the lemma is proved.\end{proof}

The purpose of the next lemma is to enable us to deduce the value that $\phi$ takes at certain points in a situation where, because $\phi$ is not defined everywhere, one cannot straightforwardly expand and use bilinearity.

\begin{lemma}\label{splittingEqnLemma}Let $U \leq G_1$ and $V \leq G_2$ be subspaces and let $\beta \colon G_1 \times G_2 \to \mathbb{F}^r$ and $\rho \colon G_1 \times G_2 \to \mathbb{F}$ be bilinear. Let $B = \{(x,y) \in U \times V \colon \beta(x,y) = 0\}$ and let $B^0 = \{(x,y) \in B \colon \rho(x,y) = 0\}$. Let $(x,y), (z,w), (u,v) \in B$ be points such that $\rho(x,y) = \rho(z,w) = \rho(u,v) = 1$ and $\rho = 0$ for all other points in $\{x,z,u\} \times \{y,w,v\}$. Let $\phi \colon B^0 \to H$ be a bilinear map. Then, for all $l \in \mathbb{F}$, we have
\begin{align*}\phi(x - lz, ly + w) &=\phi(x - z, y + w) + (l-1)\phi(x  - u, y + v) - (l-1)\phi(z - u, w + v)\\
&\hspace{0.5cm} -(l-1)\phi(x,v) - (l^2-1) \phi(z,y) + (l-1)\phi(u,y) + (l-1)\phi(z,v) - (l-1)\phi(u,w).\end{align*}
Also,
\begin{align}\label{eqSplitOrth}\phi(x - lz, ly + w) &=l\phi(x  - u, y + v) - l\phi(z - u, w + v)\nonumber\\
& \hspace{0.5cm}+\phi(x,w) -l\phi(x,v)  - l^2 \phi(z,y) + l \phi(u,y) + l \phi(z,v) - l\phi(u,w).\end{align}
\end{lemma}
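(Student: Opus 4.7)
My plan is to establish identity \eqref{eqSplitOrth} directly and then deduce the first identity: setting $l = 1$ in \eqref{eqSplitOrth} collapses its left-hand side to $\phi(x - z, y + w)$ and produces an auxiliary identity for it in terms of $P = \phi(x - u, y + v)$, $Q = \phi(z - u, w + v)$, and the six off-diagonal evaluations; subtracting this from \eqref{eqSplitOrth} at general $l$ yields the first formula. Every term is well-defined: in particular $\rho(x - lz, ly + w) = l\rho(x, y) + \rho(x, w) - l^2\rho(z, y) - l\rho(z, w) = l - l = 0$, and analogous direct computations verify that the $\rho$-values at $(x - u, y + v)$, $(z - u, w + v)$, and each off-diagonal one-point pair vanish.

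The main computation proceeds via a choreographed sequence of bilinearity splits. First I shift the second coordinate by $v$: since $\rho(x - lz, v) = 0$, linearity gives $\phi(x - lz, ly + w) = \phi(x - lz, ly + w + v) - \phi(x, v) + l\phi(z, v)$. The admissible first-slot subspace paired with $ly + w + v$ is two-dimensional, spanned by $\{x - lz, z - u\}$, so $\phi(x - lz, ly + w + v) = \phi(x + (1-l)z - u, ly + w + v) - \phi(z - u, ly + w + v)$; the second piece splits further along the second-slot admissible basis $\{y, w + v\}$ into $l\phi(z, y) - l\phi(u, y) + Q$. For the first piece, a second-slot split along $\{y + v, w + (1-l)v\}$ followed by first-slot splits along $\{x - u, z\}$ (paired with $y + v$) and along $\{x, (l-1)z + u\}$ (paired with $w + (1-l)v$) reduces everything to $P$, elementary off-diagonal evaluations, and a single residual $\phi((l-1)z + u, w + (1-l)v)$. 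This residual satisfies an auxiliary identity $\phi((l-1)z + u, w + (1-l)v) = (l-1)Q - l(l-1)\phi(z, v) + l\phi(u, w)$ --- a variant of \eqref{eqSplitOrth} on the sub-grid $\{z, u\} \times \{w, v\}$ with $Q$ now playing the role that the pair $(P, Q)$ played in the original problem --- proved by a strictly analogous sequence of shifts and splits. Substituting and collecting terms then recovers \eqref{eqSplitOrth} exactly.

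The principal obstacle is that the $\rho$-matrix on the grid $\{x, z, u\} \times \{y, w, v\}$ is a permutation matrix, so $B^0$ contains no $2 \times 2$ bilinearity plane through the three diagonal points $(x, y), (z, w), (u, v)$. Every split must therefore be one-dimensional, and any attempt to shift simultaneously by $\alpha u$ in the first slot and $\beta v$ in the second introduces an extra contribution $\alpha\beta$ in $\rho$, forcing $\alpha\beta = 0$ and ruling out two-parameter shifts. Choosing the correct sequence of one-dimensional admissible directions so that they conspire to produce the advertised identity is the principal technical burden; the underlying guiding heuristic is the formal identity $\phi(x, y) - \phi(z, w) = P - Q - \phi(x, v) + \phi(u, y) + \phi(z, v) - \phi(u, w)$, which the defined evaluations must synthesize without ever naming the undefined quantities $\phi(x, y)$, $\phi(z, w)$, or $\phi(u, v)$ individually.
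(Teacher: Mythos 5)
Your main chain of splits is correct and, up to the point where you decompose $\phi(x+(1-l)z-u,\,ly+w+v)$, is essentially the paper's opening manipulation. The divergence is the second-slot decomposition: the paper writes $ly+w+v=\bigl((l-1)y+w\bigr)+(y+v)$, which produces the residual $\phi\bigl(x-(l-1)z,\,(l-1)y+w\bigr)$ and grounds an induction on $l$ with base case $l=1$, whereas you write $ly+w+v=l(y+v)+\bigl(w+(1-l)v\bigr)$, which instead produces the residual $\phi\bigl((l-1)z+u,\,w+(1-l)v\bigr)$. That residual has the same ``complexity'' as the original quantity (it is, up to relabelling, $\phi(X-LZ,\,LY+W)$ with $(X,Y)=(u,v)$, $(Z,W)=(z,w)$, third point $(x,y)$, and $L=1-l$), so there is no induction to close.

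The gap is in the ``auxiliary identity'' $\phi\bigl((l-1)z+u,\,w+(1-l)v\bigr)=(l-1)Q-l(l-1)\phi(z,v)+l\phi(u,w)$, which you assert is ``proved by a strictly analogous sequence of shifts and splits'' on the sub-grid $\{z,u\}\times\{w,v\}$. That cannot work as stated: the $\rho$-matrix on $\{z,u\}\times\{w,v\}$ is the $2\times2$ identity, so the only vector in $\operatorname{span}\{z,u\}$ with $\rho(\cdot,\,w+(1-l)v)=0$ is a multiple of $(l-1)z+u$ itself, and symmetrically in the second slot; there are no admissible one-dimensional directions within the sub-grid, hence no shifts or splits. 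Any attempt that leaves the sub-grid (shifting by $y$ in the second slot, or by $x$ in the first, which are the only legal moves) necessarily introduces terms such as $\phi(z,y),\phi(u,y),\phi(x,w),\phi(x,v)$ and further mixed residuals like $\phi(x-z,y+w)$, none of which appear in the claimed right-hand side; you have no argument that these cancel. Moreover, applying your own chain of splits to this residual with the relabelled roles reproduces $\phi(x-lz,ly+w)$ as the new residual, so the ``analogous'' argument is circular rather than terminating. The auxiliary identity is \emph{formally} correct (it is a consequence of the lemma, as one sees by comparing your intermediate expression with \eqref{eqSplitOrth}), but that is precisely what you are trying to prove, so it cannot be cited.

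To repair the argument, replace your Step 4 split by the paper's split $ly+w+v=\bigl((l-1)y+w\bigr)+(y+v)$, then further split $\phi\bigl(x-(l-1)z-u,\,(l-1)y+w\bigr)$ in the first slot as $\phi\bigl(x-(l-1)z,\,(l-1)y+w\bigr)-\phi\bigl(u,\,(l-1)y+w\bigr)$; collecting terms reduces the first identity at parameter $l$ to the first identity at parameter $l-1$, with the trivial base case $l=1$. That is exactly the paper's proof. Your deduction of the first identity from \eqref{eqSplitOrth} (by specialising $l=1$ and subtracting) is fine once \eqref{eqSplitOrth} is established, but the paper proceeds in the opposite direction (proving the first identity first and then specialising $l=0$ to obtain \eqref{eqSplitOrth}); either direction is acceptable.
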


\noindent\textbf{Remark.}~Here and in the rest of the paper, whenever $\phi$ is a map with domain $D$ and we write an expression of the form $\phi(p)$, we are tacitly stating that the point $p$ lies in $D$.

\begin{proof}Note first that our hypotheses imply that all the points where we evaluate $\phi$ do indeed belong to $B^0$. We prove the claim by induction on $l$. For $l = 1$, the claim is easy to check. Assume now that it holds for some $l-1$. Then
\begin{align*}\phi(x - lz, ly + w) = & \phi(x - lz, ly + w + v) - \phi(x, v) + l \phi(z,v) \\
=&\phi(x - (l-1)z - u, ly + w + v) - \phi(z - u, ly + w + v) - \phi(x, v) + l \phi(z,v) \\
=&\phi(x - (l-1)z - u, ly + w + v) - \phi(z - u, w + v) - l \phi(z, y) + l\phi(u,y) - \phi(x, v) + l \phi(z,v) \\
=&\phi(x - (l-1)z - u, (l-1)y + w) + \phi(x - (l-1)z - u, y + v) - \phi(z - u, w + v)\\
 &\hspace{1cm}- l \phi(z, y) + l\phi(u,y) - \phi(x, v) + l \phi(z,v) \\
=&\phi(x - (l-1)z - u, (l-1)y + w) + \phi(x  - u, y + v)- (l-1)\phi(z, y)- (l-1)\phi(z, v)\\
&\hspace{1cm}  - \phi(z - u, w + v) - l \phi(z, y) + l\phi(u,y) - \phi(x, v) + l \phi(z,v) \\
=&\phi(x - (l-1)z, (l-1)y + w)  - (l-1)\phi(u, y) - \phi(u,w) + \phi(x  - u, y + v)\\
&\hspace{1cm}- (l-1)\phi(z, y)- (l-1)\phi(z, v) - \phi(z - u, w + v) \\
&\hspace{1cm} - l \phi(z, y) + l\phi(u,y) - \phi(x, v) + l \phi(z,v) \\
=&\phi(x - (l-1)z, (l-1)y + w) + \phi(x  - u, y + v) - \phi(z - u, w + v)\\
&\hspace{1cm} -\phi(x,v) - (2l-1) \phi(z,y) + \phi(u,y) + \phi(z,v) - \phi(u,w)\\
=&\phi(x - z, y + w) + (l-1)\phi(x  - u, y + v) - (l-1)\phi(z - u, w + v)\\
&\hspace{1cm} -(l-1)\phi(x,v) - (l^2-1) \phi(z,y) + (l-1)\phi(u,y) + (l-1)\phi(z,v) - (l-1)\phi(u,w),\end{align*}
where we applied the induction hypothesis in the last line.\\
\indent To deduce the second equality in the statement, use the first equality with $l = 0$ to write $\phi(x - z, y + w)$ in terms of other summands.\end{proof}

Finally, we shall also need polynomial bounds for partition rank in terms of analytic rank, whose definitions we now recall. Let $\alpha \colon G_{[k]} \to \mathbb{F}$ be a multlinear form.\\

The \emph{partition rank} of $\alpha$, introduced by Naslund in~\cite{Naslund}, is the smallest $r$ such that $\alpha$ can be written in the form $\alpha(x_{[k]}) = \sum_{i \in [r]} \beta_i(x_{I_i}) \gamma_i(x_{[k] \setminus I_i})$, for further multilinear forms $\beta_i \colon G_{I_i} \to \mathbb{F}$ and $\gamma_i \colon G_{[k] \setminus I_i} \to \mathbb{F}$, where $\emptyset \not= I_i \not= [k]$. The \emph{analytic rank} of $\alpha$, introduced by Gowers and Wolf in~\cite{GowersWolf}, is defined to be the quantity $-\log_{\mathbf{f}} \ex_{x_{[k]}} \omega^{\alpha(x_{[k]})}$.\\

When $k=2$, it is straightforward to check that both the partition rank and the analytic rank are equal to the rank of $\alpha$ in the usual linear-algebraic sense. However, when $k\geq 3$ the situation is more complicated, partly because there are many competing algebraic definitions of rank. The fact that partition rank can be bounded in terms of analytic rank was proved by Bhowmick and Lovett in~\cite{BhowmickLovett}, where they obtained Ackermannian bounds. As was very recently proved, one may in fact take polynomial bounds. 


\begin{theorem}[Janzer~\cite{JanzerRank}, Mili\'cevi\'c~\cite{LukaRank}]\label{strongInvThm}For every positive integer $k\geq 2$, there are constants $C = C^{\text{ranks}}_k, D = D^{\text{ranks}}_k > 0$ with the following property. Suppose that $\alpha \colon G_{[k]} \to \mathbb{F}$ is a multilinear form of analytic rank $r$. Then the partition rank of $\alpha$ is at most $C (r^{D} + 1)$.\end{theorem}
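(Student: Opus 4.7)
The plan is to argue by induction on $k$. For $k = 2$ one checks directly that partition rank, analytic rank and the usual matrix rank of a bilinear form over $\mathbb{F}$ all coincide up to constants (a bilinear form of matrix rank $s$ has bias exactly $\mathbf{f}^{-s}$), so the statement is immediate with $D = 1$ and a small absolute $C$.

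For $k \geq 3$, suppose $\alpha \colon G_{[k]} \to \mathbb{F}$ has analytic rank $r$, so $\ex_{x_{[k]}} \chi(\alpha(x_{[k]})) = \mathbf{f}^{-r}$. Single out the last coordinate and consider the slices $\alpha_{x_k}(x_{[k-1]}) := \alpha(x_{[k-1]}, x_k)$, which are genuine $(k-1)$-linear forms. Cauchy--Schwarz applied to $\ex_{x_{[k]}} \chi(\alpha(x_{[k]}))$ gives $\ex_{x_k} \bigl|\ex_{x_{[k-1]}} \chi(\alpha_{x_k}(x_{[k-1]}))\bigr|^2 \geq \mathbf{f}^{-2r}$, and since each inner expectation is a non-negative real, this says that a non-negligible fraction of slices have analytic rank $O(r)$ as $(k-1)$-linear forms. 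The inductive hypothesis then yields a slicewise partition-rank bound of $r^{O(1)}$.

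The hard step is to turn these per-slice decompositions into a global decomposition of $\alpha$. A priori the $x_k$-dependence of $\alpha_{x_k} = \sum_i \beta_i^{(x_k)}(x_{J_i}) \gamma_i^{(x_k)}(x_{[k-1] \setminus J_i})$ is arbitrary, so one cannot simply pair one factor with a linear form in $x_k$ to lift the decomposition. The strategy is to assemble the factor forms appearing across many slices into a single space of multilinear forms on $G_{[k-1]}$ whose dimension one bounds by $r^{O(1)}$ via a counting / spanning argument, obtain a common factorisation valid on a dense set of slices, and then apply the analytic-rank hypothesis a second time to upgrade the common factorisation on a dense set of slices to a global partition-rank decomposition of $\alpha$, absorbing any remaining error into further rank-one pieces in the $x_k$-direction.

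The main obstacle, and the reason polynomial rather than Ackermannian bounds (as in the earlier Bhowmick--Lovett approach) were obtained only recently, is quantitative control at this combining step. Peeling off a single rank-one piece and recursing would multiply the rank bound at every iteration, producing an exponential tower over the $k$ levels of the outer induction; extracting all the rank-one pieces in one shot, by first committing to a single $r^{O(1)}$-dimensional ambient space of factor forms and only then fitting everything into it, is precisely what avoids this blow-up and delivers the polynomial dependence on $r$ claimed in the theorem.
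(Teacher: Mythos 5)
This statement is not proved in the paper: Theorem~\ref{strongInvThm} is imported wholesale from Janzer~\cite{JanzerRank} and Mili\'cevi\'c~\cite{LukaRank}, cited as a black box, so there is no in-paper argument to compare your proposal against.

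Judged on its own, your write-up is a plan, not a proof. The base case $k=2$ and the Cauchy--Schwarz reduction to slicewise analytic rank are fine (though be careful: the fraction of slices $x_k$ with $\ex_{x_{[k-1]}}\chi(\alpha_{x_k})\geq\mathbf{f}^{-r}/2$ is only of order $\mathbf{f}^{-r}$, which is tiny, not ``non-negligible'' in any absolute sense --- the argument must work with a set of slices that is merely dense relative to an appropriate variety). The real content of the theorem lies entirely in the step you label ``the hard step'' and then leave unargued. Your description --- assemble the slicewise factor forms into a single $r^{O(1)}$-dimensional space by a counting/spanning argument, obtain a common factorisation on a dense set of slices, then apply the analytic-rank hypothesis a second time to upgrade to a global decomposition --- is a reasonable one-paragraph summary of the flavour of the strategy in~\cite{LukaRank}, but as written it supplies no mechanism. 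You do not say how the a priori arbitrary $x_k$-dependence of the slice decompositions is controlled, what it means quantitatively for a factorisation to hold ``on a dense set of slices,'' or how the error on the remaining slices is absorbed without reintroducing exactly the exponential blow-up you correctly identify as the obstacle. Those are precisely the points at which the actual proofs do their work, and they are long and delicate; moreover the two cited proofs proceed along genuinely different lines (Janzer's via subadditivity properties of analytic rank, Mili\'cevi\'c's via an inductive decomposition scheme closer to what you sketch), neither of which your proposal reproduces in checkable form. As it stands there is a genuine gap: the combining step is named but not carried out.
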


Note that the proof in \cite{LukaRank} yields constants $C_k$ and $D_k$ that do not depend on the cardinality of the field $\mathbb{F}$. In the special case of polynomials on a single vector space, this was conjectured by Kazhdan and Ziegler~\cite{KazhdanZiegler1},~\cite{KazhdanZiegler2}.

\boldSection{Extending multilinear maps using one-sided regularity}

When two points $x_{[k]}, y_{[k]} \in G_{[k]}$ differ in a single coordinate, say $d$, we write $(x\ominus y)_{[k]}$ for the point with coordinates $(x\ominus y)_i = x_i=y_i$, when $i \not= d$, and $(x\ominus y)_d = x_d - y_d$. Notice that if $B$ is a multilinear variety, then whenever $x_{[k]}, y_{[k]} \in B$ differ in a single coordinate, the point $x\ominus y$ belongs to $B$ as well. 

\begin{theorem}\label{qrExtension}Let $\rho \colon G_{[k]} \to \mathbb{F}$ and $\beta_i \colon G_{I_i} \to \mathbb{F}$, $i \in [m]$ be multilinear forms. Write $\mathcal{I} = \{i \in [m] \colon I_i = [k]\}$. Let $B = \{x_{[k]} \in G_{[k]} \colon (\forall i \in [m])\, \beta_i(x_{I_i}) = 0\}$ and let $B^0 = \{x_{[k]} \in B \colon \rho(x_{[k]}) = 0\}$. Let $H$ be another $\mathbb{F}$-vector space and let $\phi \colon B^0 \to H$ be a \emph{multilinear map}, i.e.,\ a map such that whenever $x_{[k]}, y_{[k]} \in B^0$ differ in a single coordinate then $\phi(x\ominus y) = \phi(x) - \phi(y)$. Suppose that for each $\lambda \in \mathbb{F}^{\mathcal{I}}$
\begin{equation}\label{qrCondition}\ex_{x_{[k]}} \chi \Big(\rho(x_{[k]}) + \sum_{i \in \mathcal{I}} \lambda_i \beta_i(x_{[k]})\Big) < \frac{1}{2k^2}\mathbf{f}^{-(2k^2 + k + 1) (m + 1)2^{2k+3}}.\end{equation}
Then, for each $z_{[k]} \in B \setminus B^0$ and $h_0 \in H$, there is a unique multilinear map $\phi^{\text{ext}} \colon B \to H$ such that $\restr{\phi^{\text{ext}}}{B^0} = \phi$ and $\phi^{\text{ext}}(z_{[k]}) = h_0$.\end{theorem}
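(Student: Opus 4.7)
The plan is to prove uniqueness first, then existence, both resting on Corollary~\ref{layerPath}: the hypothesis~\eqref{qrCondition} is well in excess of what Proposition~\ref{nonzeroConnIntro} demands (with $r=m$), so Corollary~\ref{layerPath} is applicable to $\rho$ and the $\beta_i$. For uniqueness, given two candidate extensions, their difference $\psi$ is multilinear on $B$, vanishes on $B^0$, and vanishes at $z$. For $w \in B$ with $\rho(w) \neq 0$, Corollary~\ref{layerPath} produces a path $z = q^0, q^1, \dots, q^s = (\lambda_1 w_1, \dots, \lambda_k w_k) \in B$ with $\rho$ constant along the path and each $\lambda_i \neq 0$. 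The consecutive differences $q^i \ominus q^{i-1}$ have $\rho$-value $\rho(q^i)-\rho(q^{i-1})=0$, hence lie in $B^0$. Thus $\psi(q^i)-\psi(q^{i-1}) = \psi(q^i \ominus q^{i-1}) = 0$; telescoping gives $\psi(q^s) = 0$, and multilinearity of $\psi$ (valid since $(\mu_1 w_1, w_2, \dots, w_k) \in B$ for every $\mu_1 \in \mathbb{F}$) yields $\lambda_1\cdots \lambda_k \psi(w) = 0$, so $\psi(w)=0$.

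For existence, I would define $\phi^{\text{ext}}$ by the same path prescription: $\phi^{\text{ext}}|_{B^0} = \phi$, and for $w \in B\setminus B^0$,
\[
\phi^{\text{ext}}(w) = (\lambda_1\cdots\lambda_k)^{-1}\Bigl(h_0 + \sum_{i=1}^s \phi(q^i \ominus q^{i-1})\Bigr),
\]
for any path from Corollary~\ref{layerPath}. Two obligations remain: independence of the chosen path, and multilinearity of the resulting map.

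The main obstacle is path-independence. The plan is to consider the closed loop obtained by concatenating two paths with one reversed, and to show that its telescoping sum vanishes. Local moves on a loop --- swapping two adjacent steps in different coordinates, or merging two consecutive steps in the same coordinate --- preserve the sum by multilinearity of $\phi$ on $B^0$, since along a $\rho$-constant loop every difference point has $\rho = 0$ and thus lies in $B^0$. The genuinely difficult contribution, where the loop is not reducible by such elementary moves, is handled by using~\eqref{qrCondition} to supply, via Lemma~\ref{nonzeroPtLemma}, enough auxiliary points in $B\setminus B^0$ with prescribed algebraic properties so that the identity of Lemma~\ref{splittingEqnLemma} (and its higher-$k$ analogues) produces relations forcing the loop sum to vanish. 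The specific exponent in~\eqref{qrCondition} is tuned so that all the invocations of Lemma~\ref{nonzeroPtLemma} needed in this step succeed.

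Once path-independence is established, multilinearity of $\phi^{\text{ext}}$ reduces to verifying $\phi^{\text{ext}}(w)-\phi^{\text{ext}}(w')=\phi^{\text{ext}}(w\ominus w')$ whenever $w, w'\in B$ differ in one coordinate with $w\ominus w' \in B$. When all three points lie in $B^0$ this is the multilinearity of $\phi$; in the remaining cases, classified by which of $\rho(w), \rho(w'), \rho(w\ominus w')$ are zero (with at least two necessarily nonzero, and if two are nonzero they are forced equal by multilinearity of $\rho$), one chooses paths for $w$ and $w'$ sharing a common initial segment, so that the final edges directly realize the ``$\ominus$'' relation via path-independence.
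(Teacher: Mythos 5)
Your high-level scaffolding matches the paper's: define $\phi^{\text{ext}}$ along $\mathbf{s}$-good paths from Corollary~\ref{layerPath}, prove uniqueness by telescoping along such a path (your uniqueness argument is correct), then establish well-definedness (path-independence) and multilinearity. The uniqueness step and the final multilinearity reduction are in good shape. However, the core of the theorem is path-independence, and that is where your plan has a genuine gap.

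The problem is the ``elementary loop moves.'' You claim that swapping two adjacent steps in different coordinates preserves the sum, the implicit assumption being that the new intermediate point stays in $B$. That is false: if $q^{i-1}\to q^i$ changes coordinate $a$ and $q^i\to q^{i+1}$ changes coordinate $b$, with all three in $B$, the ``fourth corner'' $q'^i$ (namely $q^{i-1}$ with coordinate $b$ replaced by $q^{i+1}_b$) need not lie in $B$. Writing $q^i_a=q^{i-1}_a+h$ and $q^{i+1}_b=q^{i-1}_b+g$, multilinearity gives $\beta_j(q'^i_{I_j})=-\beta_j(h,g,q^{i-1}_{I_j\setminus\{a,b\}})$ for any $\beta_j$ with $\{a,b\}\subset I_j$, and there is no reason for this to vanish. (A concrete instance: $\beta(x,y)=x_1y_1+x_2y_2+x_3y_3$, $q^0=((0,0,1),(0,1,0))$, $q^1=((1,0,1),(0,1,0))$, $q^2=((1,0,1),(1,1,-1))$ are all zeros of $\beta$, but the swapped intermediate $((0,0,1),(1,1,-1))$ gives $\beta=-1$.) So the loop cannot in general be contracted by such moves, and the ``genuinely difficult contribution'' you defer to is in fact the whole difficulty. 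Your remaining sentence about using Lemma~\ref{nonzeroPtLemma} and ``higher-$k$ analogues'' of Lemma~\ref{splittingEqnLemma} is a gesture, not an argument, and the paper does not prove higher-$k$ analogues of that lemma anyway.

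The paper's actual mechanism is different and worth contrasting. Rather than reducing a concatenated loop, it takes each of the two $\mathbf{s}$-good sequences separately and, using Lemma~\ref{nonzeroPtLemma}, produces a single auxiliary point $e_{[k]}$ with $\rho(e)=-1$ that is ``orthogonal'' (in the sense of Proposition~\ref{orthoStrong}) to every point on \emph{both} paths. Proposition~\ref{manipProp} then converts the telescoping sum along each path into a closed-form expression depending only on the endpoints $z$, $x$, the point $e$, and a fixed set of $2^k-2$ ``corner'' terms indexed by proper nonempty $I\subsetneq[k]$. The two resulting expressions differ only through scalars $\tau_i$ with $\prod_i\tau_i=1$, and Proposition~\ref{zeCancelation} (which is where the $k=2$ identity of Lemma~\ref{splittingEqnLemma} is actually used, after freezing all but two coordinates and exploiting property \textbf{(iv)} of Proposition~\ref{orthoStrong}) shows that the expression is invariant under such rescalings. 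Nothing in your proposal supplies a substitute for this telescoping-collapse via an orthogonal point, so as written the path-independence step is missing.
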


\noindent\textbf{Remark.}~The theorem says that if $\rho$ is sufficiently quasirandom with respect to the other forms $\beta_i$, then we may uniquely extend $\phi$ to the larger variety $B$ that we obtain by removing $\rho$ from the definition of the domain of $\phi$. This observation is crucial and it allows us to avoid strong assumptions such as the domain variety having high rank (as in the result of Kazhdan and Ziegler). 
\medskip

The proof splits up into several stages. We begin by explaining how the map $\phi^{\text{ext}}$ is defined. To simplify the writing slightly, we assume that $\rho(z_{[k]}) = 1$, which we may do without loss of generality. Let $x_{[k]} \in B \setminus B^0$ be given. By Corollary~\ref{layerPath} there is a sequence $z_{[k]} = q^0_{[k]}, q^1_{[k]}, \dots,$ $q^s_{[k]} = (\lambda_1 x_1, \dots, \lambda_k x_k) \in G_{[k]}$ with the properties stated in the conclusion of that corollary, the fourth of which gives us that $\rho(q^s_{[k]})=1$ and therefore that $\rho(x_{[k]})=\prod_{i\in[k]}\lambda_i^{-1}$. For an integer $\mathbf{s}$, we call a sequence that satisfies the first, second and fourth properties of the corollary \emph{$\mathbf{s}$-good} if $s \leq \mathbf{s}$. In particular, the corollary says that there is always a $(2k+1)(2^k-1)$-good sequence.\\
\indent Assume for a moment that $\phi^{\text{ext}} \colon B \to H$ is a multilinear map that extends $\phi$. Then, since each $(q^{i+1} \ominus q^i)_{[k]} \in B^0$, we must have 
\begin{align*}\phi^{\text{ext}}(x_{[k]}) = &\Big(\prod_{i \in [k]} \lambda_i^{-1}\Big) \phi^{\text{ext}}(q^s_{[k]})\\
= &\rho(x_{[k]}) \phi^{\text{ext}}(q^s_{[k]})\\
= &\rho(x_{[k]}) \Big(\phi^{\text{ext}}(q^s_{[k]} \ominus q^{s-1}_{[k]}) + \phi^{\text{ext}}(q^{s-1}_{[k]})\Big)\\
= &\rho(x_{[k]}) \Big(\phi^{\text{ext}}(q^s_{[k]} \ominus q^{s-1}_{[k]}) + \dots + \phi^{\text{ext}}(q^1_{[k]} \ominus q^0_{[k]}) + \phi^{\text{ext}}(q^0_{[k]})\Big)\\
= &\rho(x_{[k]}) \Big(\phi(q^s_{[k]} \ominus q^{s-1}_{[k]}) + \dots + \phi(q^1_{[k]} \ominus q^0_{[k]}) + h_0\Big).\end{align*}
From this we see that if $\phi^{\text{ext}}$ exists, it has to be unique. 

We use this observation to define the map $\phi^{\text{ext}}$. For each $x_{[d]} \in B \setminus B^0$, we 
use Corollary~\ref{layerPath} to choose a sequence $q^0_{[k]} = z_{[k]}, q^1_{[k]}, q^2_{[k]}, \dots, q^s_{[k]} = (\lambda_1 x_1, \dots, \lambda_k x_k)$ in $B\setminus B^0$ such that $\rho$ is equal at all points, any two consecutive points differ in exactly one coordinate, and $\lambda_1, \dots, \lambda_k$ are non-zero elements of  $\mathbb{F}$ and $s\leq\mathbf{s}=(2k+1)(2^k-1)+1$. (The addition of 1 to the bound in Corollary~\ref{layerPath} is intentional here: it will simplify the proof that the map $\phi^{\text{ext}}$ we are defining is multilinear.) We then take $\phi^{\text{ext}}(x_{[k]})$ to be 
\begin{equation}\label{phiDef}\rho(x_{[k]}) \Big(\phi(q^s_{[k]} \ominus q^{s-1}_{[k]}) + \dots + \phi(q^1_{[k]} \ominus q^0_{[k]}) + h_0\Big),\end{equation}
If $x_{[d]} \in B^0$, then we simply set $\phi^{\text{ext}}(x_{[k]}) = \phi(x_{[k]})$. 
\medskip

It remains to show that $\phi^{\text{ext}}$ is well-defined and multilinear.\\

\noindent\textbf{3.1. The extension map is well-defined.} 
\smallskip

Let $q^0_{[k]} = z_{[k]},q^1_{[k]}, \dots, q^s_{[k]} = (\lambda_1 x_1, \dots, \lambda_k x_k)$ and $p^0_{[k]} = z_{[k]}, p^1_{[k]}, \dots, p^t_{[k]} = (\mu_1 x_1, \dots, \mu_k x_k)$ be two $\mathbf{s}$-good sequences. In particular, $\prod_{i \in [k]} \lambda_i = \prod_{i \in [k]} \mu_i \not= 0$. We need to show that
\[\phi(q^s_{[k]} \ominus q^{s-1}_{[k]}) + \dots + \phi(q^1_{[k]} \ominus q^0_{[k]}) + \phi(p^0_{[k]} \ominus p^1_{[k]}) + \dots + \phi(p^{t-1}_{[k]} \ominus p^t_{[k]}) = 0.\]
As a slight digression, we note that if $\phi$ were a global multilinear map, then this would be trivial to prove, since $\phi(q^s_{[k]} \ominus q^{s-1}_{[k]})$ could be split as $\phi(q^s_{[k]}) - \phi(q^{s-1}_{[k]})$, and so on, and $\phi(q^s_{[k]}) = \phi(p^t_{[k]})$. We mimic this proof, by using Lemma~\ref{nonzeroPtLemma} to find a point `orthogonal' to the sequence $q_{[k]}^i$. First we prove the following claim that exploits the properties of such a point (and explains the meaning of `orthogonality' we have in mind).\\ 

In the proof below, and in subsequent arguments, when we write an expression of the form $\Bigl((a_i)_{i\in F}, (b_i)_{i\in E\setminus F}\Bigr)$, it should be understood as the sequence $(c_i)_{i\in E}$ such that $c_i=a_i$ when $i\in F$ and $c_i=b_i$ when $i\in E\setminus F$. 

\begin{proposition}\label{manipProp}Let $q^0_{[k]} = z_{[k]}, q^1_{[k]}, \dots, q^s_{[k]} = (\lambda_1 x_1, \dots, \lambda_k x_k)$ be an $\mathbf{s}$-good sequence and let $\nu_1, \dots, \nu_k \in \mathbb{F}$ be non-zero scalars such that $\prod_{i \in [k]} \nu_i \cdot \prod_{i \in [k]} \lambda_i = 1$. Let $e_{[k]} \in G_{[k]}$ be a point that satisfies the conditions
\begin{itemize}
\item $\rho(e_{[k]}) = -1$,
\item $(\forall \emptyset\not= I \subsetneq [k])(\forall i \in [0,s])\  \rho(e_I, q^i_{[k] \setminus I}) = 0$,
\item $(\forall i \in [0,s])(\forall j \in [m])(\forall \emptyset \not=J \subset I_j)\  \beta_j(e_J, q^i_{I_j\setminus J}) = 0$.
\end{itemize}
Then
\begin{align*}&\phi(q^s_{[k]} \ominus q^{s-1}_{[k]}) + \dots + \phi(q^1_{[k]} \ominus q^0_{[k]}) = \Big(\prod_{i \in [k]} \lambda_i\Big) \phi\Big(x_1 + \nu_1e_1, \dots, x_k + \nu_ke_k\Big) - \phi\Big(z_1 + \nu_1\lambda_1e_1, \dots,  z_k + \nu_k\lambda_ke_k\Big)\\
&\hspace{8cm}-\sum_{\emptyset \not=I \subsetneq [k]} \Big(\prod_{i \in [k]} \lambda_i\Big)\phi\Big((\nu_{i}e_{i})_{i \in I}, (x_{i})_{i \in [k] \setminus I}\Big)\\
&\hspace{8cm}+\sum_{\emptyset \not=I \subsetneq [k]} \phi\Big((\lambda_{i}\nu_{i}e_{i})_{i \in I}, (z_{i})_{i \in [k] \setminus I}\Big).\\
\end{align*}
\end{proposition}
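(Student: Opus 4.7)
My plan is to introduce an auxiliary quantity $\Psi(p)$ that morally plays the role of the hoped-for global extension $\Phi$ evaluated at $p$, establish the telescoping identity $\Psi(q^{i+1}_{[k]})-\Psi(q^i_{[k]})=\phi(q^{i+1}_{[k]}\ominus q^i_{[k]})$, and then compute the two endpoint values $\Psi(q^s_{[k]})$ and $\Psi(z_{[k]})$ directly. Writing $\lambda\nu e$ for the point whose $j$-th coordinate is $\lambda_j\nu_j e_j$, I take
\[
\Psi(p_{[k]}):=\phi\bigl(p_{[k]}+\lambda\nu e\bigr)-\sum_{\emptyset\neq I\subsetneq[k]}\phi\bigl((\lambda_j\nu_j e_j)_{j\in I},(p_j)_{j\in[k]\setminus I}\bigr).
\]

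First I would verify that $\Psi(q^i_{[k]})$ is well-defined for each $i$, i.e.\ that every point on which $\phi$ is evaluated lies in $B^0$. Each ``mixed'' summand contains $e_j$ in at least one coordinate, so multilinearity of $\rho$ and of each $\beta_j$ reduces the required vanishing to the second and third bullet-point hypotheses on $e_{[k]}$, combined with $\beta_j(q^i_{I_j})=0$, which is immediate from $q^i_{[k]}\in B$. For the term $\phi(q^i_{[k]}+\lambda\nu e)$ I expand $\rho(q^i_{[k]}+\lambda\nu e)$ by multilinearity; only the $I=\emptyset$ and $I=[k]$ contributions survive and they evaluate to $\rho(q^i_{[k]})=1$ and $\bigl(\prod_j\lambda_j\nu_j\bigr)\rho(e_{[k]})=-1$ respectively, which cancel. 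A parallel expansion handles every $\beta_j$.

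Next I would establish the telescoping identity. If $q^{i+1}_{[k]}$ and $q^i_{[k]}$ differ in coordinate $c$, then in the sum defining $\Psi$ only the summands with $c\notin I$ survive, and one-coordinate linearity of $\phi$ in direction $c$ collapses each such difference to a single $\phi$-value with $c$-entry $q^{i+1}_c-q^i_c$. Applying the same one-coordinate linearity to $\phi(q^{i+1}_{[k]}+\lambda\nu e)-\phi(q^i_{[k]}+\lambda\nu e)$ produces a single $\phi$-value with $c$-entry $q^{i+1}_c-q^i_c$ and remaining entries $q^i_j+\lambda_j\nu_j e_j$; further expanding those remaining entries by multilinearity yields, along with the term $\phi(q^{i+1}_{[k]}\ominus q^i_{[k]})$, exactly the collection of mixed summands that cancels the surviving pieces. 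Telescoping then gives $\Psi(q^s_{[k]})-\Psi(z_{[k]})=\sum_{i=0}^{s-1}\phi(q^{i+1}_{[k]}\ominus q^i_{[k]})$, which is the left-hand side of the proposition.

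Finally, $\Psi(z_{[k]})$ is by construction $\phi(z+\lambda\nu e)-\sum_{\emptyset\neq I\subsetneq[k]}\phi((\lambda_j\nu_j e_j)_{j\in I},(z_j)_{j\in[k]\setminus I})$, so $-\Psi(z_{[k]})$ already reproduces two of the four groups of terms on the right-hand side. For $\Psi(q^s_{[k]})$ the goal is to pull each $\lambda_j$ outside $\phi$; the key observation is that the linear slices
\[
\bigl\{((a_j e_j)_{j\in I},(b_j x_j)_{j\in[k]\setminus I}):a_j,b_j\in\mathbb{F}\bigr\}\quad\text{and}\quad\bigl\{(c_j(x_j+\nu_j e_j))_{j\in[k]}:c_j\in\mathbb{F}\bigr\}
\]
both sit inside $B^0$, which follows from the three bullet-point hypotheses together with $\prod_j\lambda_j\nu_j=1$. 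On these slices $\phi$ is multilinear in the scalar parameters, so one may freely pull the $\lambda_j$'s out, obtaining $\phi(q^s_{[k]}+\lambda\nu e)=\prod_j\lambda_j\cdot\phi(x_1+\nu_1 e_1,\ldots,x_k+\nu_k e_k)$ and the analogous identity $\phi\bigl((\lambda_j\nu_j e_j)_{j\in I},(\lambda_j x_j)_{j\in[k]\setminus I}\bigr)=\prod_j\lambda_j\cdot\phi\bigl((\nu_j e_j)_{j\in I},(x_j)_{j\in[k]\setminus I}\bigr)$ for each mixed summand. Substituting these into $\Psi(q^s_{[k]})-\Psi(z_{[k]})$ reproduces the full right-hand side. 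The only genuine obstacle in the whole argument is the bookkeeping just indicated: at every step one must check that the specific point at which $\phi$ is evaluated lies in $B^0$ and that each invocation of ``multilinearity'' is really one-coordinate linearity on a linear slice of $B^0$, and this is precisely what the rather elaborate list of orthogonality conditions imposed on $e_{[k]}$ has been designed to achieve.
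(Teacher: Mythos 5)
Your proposal is correct and is essentially the same argument as the paper's: the paper computes each $\phi\bigl((q^{i+1}\ominus q^i)_{[k]}\bigr)$ by inserting $\lambda_j\nu_j e_j$ one coordinate at a time, and the resulting expression is precisely your $\Psi(q^{i+1}_{[k]})-\Psi(q^i_{[k]})$, after which the paper also telescopes and then (implicitly) pulls out the scalars $\lambda_j$. Your introduction of the potential $\Psi$ packages the bookkeeping more transparently, and your note that every intermediate evaluation point lies in $B^0$ because its coordinates are drawn from $\{q^i_j,\ q^{i+1}_j,\ \lambda_j\nu_j e_j,\ q^i_j+\lambda_j\nu_j e_j\}$ (plus $q^{i+1}_c-q^i_c$ in the moving coordinate) is exactly the content that the paper's coordinate-by-coordinate expansion is designed to make visible.
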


\begin{proof}Suppose that $q^{i+1}_{[k]}$ and $q^{i}_{[k]}$ differ in coordinate $d$. Then
\begin{align*}&\phi((q^{i+1} \ominus q^i)_{[k]}) = \phi\Big(q^i_{[d-1]}, q^{i+1}_d-q^i_d, q^i_{[d+1,k]}\Big) \\
&\hspace{1cm}= \phi\Big(q^i_1 + \nu_1\lambda_1e_1, q^i_{[2, d-1]}, q^{i+1}_d-q^i_d, q^i_{[d+1,k]}\Big) - \phi\Big(\nu_1\lambda_1e_1, q^i_{[2, d-1]}, q^{i+1}_d-q^i_d, q^i_{[d+1,k]}\Big)\\
&\hspace{1cm}=\phi\Big(q^i_1 + \nu_1\lambda_1e_1, q^i_{[2, d-1]}, q^{i+1}_d-q^i_d, q^i_{[d+1,k]}\Big) - \phi\Big(\nu_1\lambda_1e_1, q^{i+1}_{[2, k]}\Big) + \phi\Big(\nu_1\lambda_1e_1, q^{i}_{[2, k]}\Big)\\
&\hspace{1cm}=\phi\Big(q^i_1 + \nu_1\lambda_1e_1, q^i_2 + \nu_2\lambda_2e_2, q^i_{[3, d-1]}, q^{i+1}_d-q^i_d, q^i_{[d+1,k]}\Big) - \phi\Big(q^i_1 + \nu_1\lambda_1e_1, \nu_2\lambda_2e_2, q^{i+1}_{[3, k]}\Big)\\
&\hspace{3cm} + \phi\Big(q^i_1 + \nu_1\lambda_1e_1, \nu_2\lambda_2e_2, q^{i}_{[3, k]}\Big) - \phi\Big(\nu_1\lambda_1e_1, q^{i+1}_{[2, k]}\Big) + \phi\Big(\nu_1\lambda_1e_1, q^{i}_{[2, k]}\Big).\\
\end{align*}
Repeating this argument once for each coordinate apart from the $d$th and using the fact that $q^i_j=q^{i+1}_j$ whenever $j\ne d$, we arrive at the expression 
\begin{align*}
\phi\Big(&q^i_1 + \nu_1\lambda_1e_1, \dots, q^i_{d-1} + \nu_{d-1}\lambda_{d-1}e_{d-1}, q^{i+1}_d-q^i_d, q^i_{d+1} + \nu_{d+1}\lambda_{d+1}e_{d+1}, \dots, q^i_k + \nu_k\lambda_ke_k\Big)\\
&\hspace{3cm} - \sum_{j \in [d-1]} \phi\Big(q^{i+1}_1 + \nu_1\lambda_1e_1, \dots, q^{i+1}_{j-1} + \nu_{j-1}\lambda_{j-1}e_{j-1}, \nu_j\lambda_je_j, q^{i+1}_{[j+1, d-1]}, q^{i+1}_d, q^{i+1}_{[d+1, k]}\Big)\\
&\hspace{3cm} +  \sum_{j \in [d-1]}\phi\Big(q^{i}_1 + \nu_1\lambda_1e_1, \dots, q^{i}_{j-1} + \nu_{j-1}\lambda_{j-1}e_{j-1}, \nu_j\lambda_je_j, q^{i}_{[j+1, d-1]}, q^i_d, q^{i}_{[d+1, k]}\Big)\\
&\hspace{3cm} - \sum_{j \in [d+1, k]} \phi\Big(q^{i+1}_1 + \nu_1\lambda_1e_1, \dots, q^{i+1}_{d-1} + \nu_{d-1}\lambda_{d-1}e_{d-1}, q^{i+1}_d, q^{i+1}_{d+1} + \nu_{d+1}\lambda_{d+1}e_{d+1}, \dots,\\
&\hspace{7cm}q^{i+1}_{j-1} + \nu_{j-1}\lambda_{j-1}e_{j-1}, \nu_j\lambda_je_j,  q^{i+1}_{[j+1, k]}\Big)\\
&\hspace{3cm}+ \sum_{j \in [d+1, k]} \phi\Big(q^{i}_1 + \nu_1\lambda_1e_1, \dots, q^{i}_{d-1} + \nu_{d-1}\lambda_{d-1}e_{d-1}, q^{i}_d, q^{i}_{d+1} + \nu_{d+1}\lambda_{d+1}e_{d+1}, \dots,\\
&\hspace{7cm}q^{i}_{j-1} + \nu_{j-1}\lambda_{j-1}e_{j-1}, \nu_j\lambda_je_j,  q^{i}_{[j+1, k]}\Big).\\
\end{align*}
Expanding this out gives
\begin{align*}\phi\Big(&q^{i+1}_1 + \nu_1\lambda_1e_1, \dots, q^{i+1}_{d-1} + \nu_{d-1}\lambda_{d-1}e_{d-1}, q^{i+1}_d + \nu_{d}\lambda_{d}e_{d}, q^{i+1}_{d+1} + \nu_{d+1}\lambda_{d+1}e_{d+1}, \dots, q^{i+1}_k + \nu_k\lambda_ke_k\Big)\\
&\hspace{1cm}-\phi\Big(q^i_1 + \nu_1\lambda_1e_1, \dots, q^i_{d-1} + \nu_{d-1}\lambda_{d-1}e_{d-1}, q^{i}_d + \nu_{d}\lambda_{d}e_{d}, q^i_{d+1} + \nu_{d+1}\lambda_{d+1}e_{d+1}, \dots, q^i_k + \nu_k\lambda_ke_k\Big)\\
&\hspace{3cm} - \sum_{\emptyset \not= I \subset [k] \setminus \{d\}} \phi\Big((\lambda_{j}\nu_{j}e_{j})_{j \in I}, (q^{i+1}_{j})_{j \in [k] \setminus I}\Big)\\
&\hspace{3cm} + \sum_{\emptyset \not= I \subset [k] \setminus \{d\}}\phi\Big((\lambda_{j}\nu_{j}e_{j})_{j \in I}, (q^{i}_{j})_{j \in [k] \setminus I}\Big)\\
\end{align*}
To see why, note that the first term on the left-hand side expands to the first two terms on the right-hand side. And after that, each set $I$ arises from the expansion of the $j$th summand in one of the sums on the left-hand side only when $j=\max I$. 
\smallskip

Using this, and writing $d_i \in [k]$ for the direction where $q^i_{[k]}$ and $q^{i-1}_{[k]}$ differ for $i \in [s]$, we obtain a telescoping sum from the first two terms, and therefore find that
\begin{align*}&\phi(q^s_{[k]} \ominus q^{s-1}_{[k]}) + \dots + \phi(q^1_{[k]} \ominus q^0_{[k]}) = \phi\Big(q^{s}_1 + \nu_1\lambda_1e_1, \dots, q^{s}_k + \nu_k\lambda_ke_k\Big) - \phi\Big(q^{0}_1 + \nu_1\lambda_1e_1, \dots,  q^{0}_k + \nu_k\lambda_ke_k\Big)\\
& \hspace{8cm}-\sum_{\emptyset \not=I \subsetneq [k]} \sum_{\substack{i \in [1,s]\\d_i \notin I}} \phi\Big((\lambda_{j}\nu_{j}e_{j})_{j \in I}, (q^{i}_{j})_{j \in [k] \setminus I}\Big)\\
&\hspace{8cm}+ \sum_{\emptyset \not=I \subsetneq [k]} \sum_{\substack{i \in [0,s-1]\\d_{i+1} \notin I}} \phi\Big((\lambda_{j}\nu_{j}e_{j})_{j \in I}, (q^{i}_{j})_{j \in [k] \setminus I}\Big),\\
&\hspace{6cm}=\phi\Big(q^{s}_1 + \nu_1\lambda_1e_1, \dots, q^{s}_k + \nu_k\lambda_ke_k\Big) - \phi\Big(q^{0}_1 + \nu_1\lambda_1e_1, \dots,  q^{0}_k + \nu_k\lambda_ke_k\Big)\\
&\hspace{8cm}-\sum_{\emptyset \not=I \subsetneq [k]}\phi\Big((\lambda_{i}\nu_{i}e_{i})_{i \in I}, (q^{s}_{i})_{i \in [k] \setminus I}\Big)\\
&\hspace{8cm}+\sum_{\emptyset \not=I \subsetneq [k]}\phi\Big((\lambda_{i}\nu_{i}e_{i})_{i \in I}, (q^{0}_{i})_{i \in [k] \setminus I}\Big),\\
\end{align*}
and the claim follows after recalling that $q^0_{[k]} = z_{[k]}$ and $q^s_{[k]} = (\lambda_1 x_1, \dots, \lambda_k x_k)$.\end{proof}

To complete the proof that $\phi^{\text{ext}}$ is well defined, we shall need a point $e_{[k]}$ with slightly stronger properties than the ones used in Proposition \ref{manipProp}. The first property is the same, the second and third are the same but now for two $s$-good sequences rather than just one, and the fourth is new.

\begin{proposition}\label{orthoStrong}Given a point $z_{[k]}$ and $\mathbf{s}$-good sequences $q^0_{[k]} = z_{[k]}, q^1_{[k]},\dots, q^s_{[k]}$ and $p^0_{[k]} = z_{[k]}, p^1_{[k]}, \dots, p^t_{[k]}$, there is a point $e_{[k]}$ that satisfies the following conditions.
\begin{itemize}
\item[\textbf{(i)}] $\rho(e_{[k]}) = -1$.
\item[\textbf{(ii)}]  $(\forall \emptyset \not= I \subsetneq [k])(\forall i \in [0,s])\ \rho(e_I; q^i_{[k] \setminus I}) = 0$ and $(\forall \emptyset\not= I \subsetneq [k])(\forall i \in [0,t])\ \rho(e_I; p^i_{[k] \setminus I}) = 0$.
\item[\textbf{(iii)}]  $(\forall i \in [0,s])(\forall j \in [m])(\forall \emptyset \not=J \subset I_j)\ \beta_j(e_J, q^i_{I_j\setminus J}) = 0$ and $(\forall i \in [0,t])(\forall j \in [m])(\forall \emptyset \not=J \subset I_j)\ \beta_j(e_J, p^i_{I_j\setminus J}) = 0$.
\item[\textbf{(iv)}]  For all pairs of distinct coordinates $c_1, c_2 \in [k]$ and all $\lambda_{[k] \setminus \{c_1, c_2\}} \in (\mathbb{F} \setminus \{0\})^{[k] \setminus \{c_1, c_2\}}, \mu \in \mathbb{F}^{\mathcal{I}_{c_1, c_2}}$,
\[\ex_{y_{c_1}, y_{c_2}} \chi\Big(\rho(y_{c_1}, y_{c_2}, (z_j - \lambda_j e_j)_{j \in [k] \setminus \{c_1, c_2\}}) - \sum_{i \in \mathcal{I}_{c_1, c_2}} \mu_i\beta_i(y_{c_1}, y_{c_2}, (z_j - \lambda_j e_j)_{j \in I_i \setminus \{c_1, c_2\}})\Big)\]
is at most $\mathbf{f}^{-(m+1) 2^{k+2}}$, where $\mathcal{I}_{c_1, c_2} = \{i \in [m] \colon c_1,c_2 \in I_i\}$.
\end{itemize}\end{proposition}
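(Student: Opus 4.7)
The plan is a probabilistic existence argument. Let $E^+$ be the set of $e_{[k]} \in G_{[k]}$ satisfying (ii), (iii), and the variant of (i) in which $\rho(e_{[k]})=1$, and let $E^{\mathrm{bad}}$ be the set of $e_{[k]}$ for which the bias in (iv) exceeds $\mathbf{f}^{-(m+1)2^{k+2}}$ for some tuple $(c_1,c_2,\lambda,\mu)$. I shall show that $|E^+|/|G_{[k]}|$ is bounded below by a positive quantity and that $|E^{\mathrm{bad}}|/|G_{[k]}|$ is strictly smaller. Any point $e \in E^+ \setminus E^{\mathrm{bad}}$ then works, up to flipping the sign of any single coordinate to convert $\rho(e_{[k]}) = 1$ into $\rho(e_{[k]}) = -1$; the sign flip preserves (ii) and (iii) (each is a zero equality of a multilinear form) and also preserves (iv), since (iv) is required for all $\lambda$ and the flip merely re-indexes a single $\lambda_j$ or does nothing if the flipped coordinate lies in $\{c_1,c_2\}$.

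For the lower bound on $|E^+|$, I observe that (ii) and (iii) together impose at most $|\mathcal{I}|$ top-level multilinear forms on $G_{[k]}$, namely $\beta_j(e_{[k]})=0$ for $j\in\mathcal{I}$ (the instances with $J = I_j = [k]$ all reduce to these), plus at most $m' \leq 2(m+1)(2^k-1)(\mathbf{s}+1)$ multilinear forms on proper subspaces $G_J$. I then adapt the proof of Lemma~\ref{nonzeroPtLemma} by expanding $\bm{1}(\rho(e_{[k]}) = 1) = \mathbf{f}^{-1}\sum_\xi \chi(\xi(\rho(e_{[k]})-1))$: the $\xi=0$ term contributes at least $\mathbf{f}^{-k(|\mathcal{I}|+m')-1}$ after Lemma~\ref{varSize}, while each $\xi\neq 0$ term is bounded via Lemma~\ref{mlBias} and hypothesis (\ref{qrCondition}) by an amount that the constants in (\ref{qrCondition}) force to be at most half the main term. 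Hence $|E^+|/|G_{[k]}| \geq \tfrac12 \mathbf{f}^{-k(|\mathcal{I}|+m')-1}$.

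For the upper bound on $|E^{\mathrm{bad}}|$, fix $(c_1,c_2,\lambda,\mu)$ and write $\sigma(e_{[k]\setminus\{c_1,c_2\}})$ for the bias in (iv). The key structural observation is that the inner form in $(y_{c_1},y_{c_2})$ is strictly bilinear: every $\beta_i$ appearing in the sum has $I_i \supset \{c_1,c_2\}$ and is multilinear, so no affine or linear-in-$y$ terms arise. Consequently $\sigma(e) = \mathbf{f}^{-\operatorname{rank} A(e)} \in [0,1]$ where $A(e)$ is the associated matrix, so $\sigma^q \leq \sigma$ for every $q\geq 1$, and Markov's inequality with $q=1$ gives that the density of bad $e$ per tuple is at most $\ex_{e_{[k]}} \sigma(e)/\tau$ with $\tau = \mathbf{f}^{-(m+1)2^{k+2}}$. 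The expectation $\ex_{e_{[k]}} \sigma(e)$ is the bias of the full multiaffine form on $G_{[k]}$; by Lemma~\ref{mlBias} it is at most the bias of its $k$-linear part, which (after the change of variables $e_j\mapsto-\lambda_j^{-1}e_j$) equals $|\ex_{x_{[k]}} \chi(\rho(x_{[k]}) - \sum_{i\in\mathcal{I}} \mu_i\beta_i(x_{[k]}))|$, an instance of (\ref{qrCondition}); the terms $\beta_i$ with $i \in \mathcal{I}_{c_1,c_2}\setminus\mathcal{I}$ drop out because they have degree $<k$ in the full variable set and contribute nothing to the multilinear part. Union-bounding over the at most $\binom{k}{2}\mathbf{f}^{k-2+m}$ tuples yields the required bound on $|E^{\mathrm{bad}}|/|G_{[k]}|$.

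Comparing the two bounds using the count $|\mathcal{I}|+m' \leq m + 2(2k+1)(m+1)4^k$, the hypothesis constant $(2k^2+k+1)2^{2k+3}$ in (\ref{qrCondition}) has been chosen just large enough to force $|E^{\mathrm{bad}}| < |E^+|$. The main obstacle is this final numerical bookkeeping, but more conceptually, the key insight is the bilinear-structure observation that $\sigma(e)$ is real non-negative, which permits the clean first-moment Markov bound and sidesteps a costlier iterated Cauchy-Schwarz or box-norm argument that might fail to meet the required threshold.
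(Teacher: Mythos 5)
Your proof is correct and follows essentially the same strategy as the paper's: lower-bound the number of $e$ satisfying (i)--(iii) via Lemma~\ref{nonzeroPtLemma}/Lemma~\ref{varSize}, then observe that the bias in (iv) is a nonnegative real (since the inner form is bilinear in $(y_{c_1},y_{c_2})$), apply Lemma~\ref{mlBias} and a first-moment Markov bound, union-bound over the parameters, and compare. The only deviations are cosmetic --- you target $\rho(e)=1$ and then sign-flip rather than working directly with $\rho(e)=-1$, and you spell out the union over $\lambda$ in the final count (a step the paper passes over silently, though the hypothesis constant in \eqref{qrCondition} comfortably absorbs it).
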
 


\begin{proof}We begin the proof by using Lemma~\ref{nonzeroPtLemma} to find at least one point that satisfies properties \textbf{(i)}, \textbf{(ii)} and \textbf{(iii)}. To achieve this, we consider the following multilinear forms.
\begin{itemize}
\item For each proper non-empty subset $I\subsetneq[k]$ and each $i\in[0,s]$ we take the form that maps $x_{[k]}$ to $\rho(x_I,q^i_{[k]\setminus I})$.
\item For each proper non-empty subset $I\subsetneq[k]$ and each $i\in[0,t]$ we take the form that maps $x_{[k]}$ to $\rho(x_I,p^i_{[k]\setminus I})$.
\item For each $i\in[0,s]$, each $j\in[m]$, and each non-empty proper subset $J\subset I_j$, we take the form that maps $x_{[k]}$ to $\beta_j(x_J,q^i_{I_j\setminus J})$.
\item For each $i\in[0,t]$, each $j\in[m]$, and each non-empty proper subset $J\subset I_j$, we take the form that maps $x_{[k]}$ to $\beta_j(x_J,p^i_{I_j\setminus J})$.
\end{itemize} 
Assumption~\eqref{qrCondition} of Theorem \ref{qrExtension} implies that for all $\lambda \in \mathbb{F}^{\mathcal{I}}$,
\[\ex_{x_{[k]}} \chi\Big(\rho(x_{[k]}) - \sum_{i \in \mathcal{I}} \lambda_i\beta_i(x_{[k]})\Big) < \mathbf{f}^{-k(m + 1)\mathbf{s}2^{k+1}},\]
where $\mathcal{I} = \{i \in [m] \colon I_i = [k]\}$. Therefore, by Lemma~\ref{nonzeroPtLemma} we have at least one point $x_{[k]}$ which evaluates to zero under all these (after suitable projections) and $\rho(x_{[k]}) = -1$.
But the set of such points is a non-empty variety of codimension at most $(m + 1)\mathbf{s}2^{k+1} + 1$, so by Lemma~\ref{varSize}, there are at least $\mathbf{f}^{-k (m + 1)\mathbf{s}2^{k+1} - k} |G_{[k]}|$ of them.\\
\indent On the other hand, for each $c_1, c_2 \in [k]$, $\mu \in \mathbb{F}^{\mathcal{I}_{c_1, c_2}}$, we have
\begin{align*}\ex_{x_{[k] \setminus \{c_1, c_2\}}}\Big(\ex_{y_{c_1}, y_{c_2}} &\chi\Big(\rho(y_{c_1}, y_{c_2}, x_{[k] \setminus \{c_1, c_2\}}) -  \sum_{i \in \mathcal{I}_{c_1, c_2}} \mu_i\beta_i(y_{c_1}, y_{c_2}, x_{I_i \setminus \{c_1, c_2\}})\Big)\Big)\\
= &\Big|\ex_{x_{[k] \setminus \{c_1, c_2\}}, y_{c_1}, y_{c_2}} \chi\Big(\rho(y_{c_1}, y_{c_2}, x_{[k] \setminus \{c_1, c_2\}}) -  \sum_{i \in \mathcal{I}_{c_1, c_2}} \mu_i\beta_i(y_{c_1}, y_{c_2}, x_{I_i \setminus \{c_1, c_2\}})\Big)\Big|,\\
\end{align*}
since the inner expectation on the left-hand side is always a nonnegative real. 
By Lemma~\ref{mlBias}, the right-hand side is at most 
\[\ex_{x_{[k]}} \chi\Big(\rho(x_{[k]}) -  \sum_{i \in \mathcal{I}} \mu_i\beta_i(x_{[k]})\Big),\]
which, using assumption~\eqref{qrCondition} of Theorem \ref{qrExtension} again, is at most 
\[\frac{1}{2k^2}\mathbf{f}^{-k (m + 1)\mathbf{s}2^{k+1}-(m+1) 2^{k+2} - m - k}.\]
From this we deduce that the set $X_{c_1,c_2} \subset G_{[k] \setminus \{c_1, c_2\}}$ of points $x_{[k] \setminus \{c_1, c_2\}}$ such that for some $\mu \in \mathbb{F}^{\mathcal{I}_{c_1, c_2}}$
\[\ex_{y_{c_1}, y_{c_2}} \chi\Big(\rho(y_{c_1}, y_{c_2}; x_{[k] \setminus \{c_1, c_2\}}) -  \sum_{i \in \mathcal{I}_{c_1, c_2}} \mu_i\beta_i(y_{c_1}, y_{c_2}; x_{I_i \setminus \{c_1, c_2\}})\Big) > \mathbf{f}^{-(m+1) 2^{k+2}}\]
has size $|X_{c_1,c_2}| \leq \frac{1}{2k^2}\mathbf{f}^{-k (m + 1)\mathbf{s}2^{k+1} - k} |G_{[k]}|$. Thus, there is a choice of $e_{[k]}$ such that the properties \textbf{(i)}, \textbf{(ii)} and \textbf{(iii)} hold and for each distinct $c_1, c_2 \in [k]$ and each $\lambda \in (\mathbb{F} \setminus \{0\})^{[k] \setminus \{c_1, c_2\}}$, the sequence $(z_i - \lambda_i e_i \colon i \in [k] \setminus \{c_1, c_2\})$ does not belong to $X_{c_1,c_2}$, which completes the proof.\end{proof}

Next, we exploit the property \textbf{(iv)} to understand how the values of $\phi(z_1 + \lambda_1 e_1, \dots, z_k + \lambda_k e_k)$ are related for different values of $\lambda_{[k]} \in (\mathbb{F}\setminus \{0\})^{[k]}$.

\begin{proposition}\label{zeCancelation}Suppose that $z_{[k]}$ and $e_{[k]}$ have the properties listed in Proposition~\ref{orthoStrong}. Then, for any $\tau, \sigma \in \mathbb{F}^k$ such that $\prod_{i \in [k]} \tau_i =  \prod_{i \in [k]} \sigma_i  = 1$, we have
\begin{align*}\phi\Big(z_1 + \tau_1e_1, \dots,  z_k + \tau_ke_k\Big) &- \sum_{\emptyset \not=I \subsetneq [k]} \phi\Big((\tau_{i}e_{i})_{i \in I}, (z_{i})_{i \in [k] \setminus I}\Big),\\
&=\phi\Big(z_1 +\sigma_1 e_1, \dots,  z_k + \sigma_ke_k\Big) -\sum_{\emptyset \not=I \subsetneq [k]} \phi\Big((\sigma_i e_{i})_{i \in I}, (z_{i})_{i \in [k] \setminus I}\Big).\end{align*} 
\end{proposition}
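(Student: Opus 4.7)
The identity asserts invariance of
\[F(\tau) := \phi(z + \tau e) - \sum_{\emptyset \ne I \subsetneq [k]} \phi\bigl((\tau_i e_i)_{i \in I}, (z_i)_{i \in [k] \setminus I}\bigr)\]
under the change $\tau \mapsto \sigma$, for $\tau, \sigma$ in the product-one torus $T := \{\mu \in (\mathbb{F}\setminus\{0\})^k : \prod_{i \in [k]} \mu_i = 1\}$. The intuition is that if $\phi$ extended to a global multilinear map, then expanding $\phi(z+\tau e)$ by multilinearity in every coordinate would collapse $F(\tau)$ to $\phi(z_{[k]}) + \phi(e_{[k]})$, a quantity manifestly independent of $\tau$; the proof's job is to justify this collapse using the bilinear-slice machinery of the paper.

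First I would reduce to the case where $\tau, \sigma$ differ in exactly two coordinates. The torus $T$ is generated by ``transvections'' of the form $(1,\dots,1,l,1,\dots,1,l^{-1},1,\dots,1)$, so any $\tau, \sigma \in T$ are joined by a chain of elements of $T$ whose consecutive terms differ only in two entries; invariance is transitive, so it suffices to treat this case. Thus assume $\sigma_{c_1} = l\tau_{c_1}$, $\sigma_{c_2} = l^{-1}\tau_{c_2}$, and $\sigma_i = \tau_i$ for $i \notin \{c_1, c_2\}$.

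Next I would pass to the bilinear slice in coordinates $(c_1, c_2)$. Freeze $w_i := z_i + \tau_i e_i = z_i + \sigma_i e_i$ for $i \in [k] \setminus \{c_1, c_2\}$ and define $\Phi(a, b) := \phi\bigl((w_i)_{i \ne c_1, c_2}, a, b\bigr)$; properties \textbf{(ii)}--\textbf{(iii)} of $e_{[k]}$ from Proposition~\ref{orthoStrong} ensure the relevant points lie in $B^0$ and that $\Phi$ is a bilinear map on the sliced $B^0$. Property \textbf{(iv)}, via a counting argument in the spirit of Lemma~\ref{nonzeroPtLemma}, supplies three distinguished pairs $(a_1, b_1), (a_2, b_2), (a_3, b_3)$ in the sliced ``$B$''-variety satisfying the hypotheses of Lemma~\ref{splittingEqnLemma}: $\tilde\rho(a_i, b_i) = 1$, $\tilde\rho(a_i, b_j) = 0$ for $i \ne j$, and $\tilde\beta_i = 0$ at all nine cross pairs.

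Third I would apply Lemma~\ref{splittingEqnLemma} twice, choosing its parameter $l'$ once so that $(x - l'z, l'y + w)$ realizes $(z_{c_1} + \tau_{c_1} e_{c_1}, z_{c_2} + \tau_{c_2} e_{c_2})$ and once so that it realizes $(z_{c_1} + l\tau_{c_1} e_{c_1}, z_{c_2} + l^{-1}\tau_{c_2} e_{c_2})$, then subtract to express $\phi(z + \tau e) - \phi(z + \sigma e)$ as an explicit combination of $\Phi$-values at the distinguished cross pairs. On the right-hand side, the difference of correction sums simplifies: terms indexed by $I \supseteq \{c_1, c_2\}$ cancel because $\tau_{c_1}\tau_{c_2} = \sigma_{c_1}\sigma_{c_2}$; for $I$ containing exactly one of $c_1, c_2$, writing $I = \{c_1\} \cup J$ (respectively $\{c_2\} \cup J$) with $J \subseteq [k] \setminus \{c_1, c_2\}$ and repeatedly applying multilinearity of $\phi$ in those coordinates (justified by properties \textbf{(ii)}--\textbf{(iii)} keeping intermediate points in $B^0$) collapses the sum to $\tau_{c_1}\Phi(e_{c_1}, z_{c_2})$ (respectively $\tau_{c_2}\Phi(z_{c_1}, e_{c_2})$). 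The target identity thus reduces to
\[\phi(z + \tau e) - \phi(z + \sigma e) = (1 - l)\tau_{c_1}\Phi(e_{c_1}, z_{c_2}) + (1 - l^{-1})\tau_{c_2}\Phi(z_{c_1}, e_{c_2}),\]
which is exactly what the subtraction of the two lemma-applications should yield after the nine-term cancellation.

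The main obstacle is the bookkeeping in this last step: choosing the distinguished points so that both targets fit the lemma's format cleanly, and verifying that the eighteen summands (nine from each application of Lemma~\ref{splittingEqnLemma}) combine, after subtraction and cancellation, to match the two collapsed correction terms. Everything else is routine once this is in place.
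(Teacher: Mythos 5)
Your proposal is correct and follows essentially the same route as the paper: reduce to the case where $\tau,\sigma$ differ in two coordinates, collapse the correction sums by multilinearity (justified by properties \textbf{(ii)}--\textbf{(iii)}), pass to the bilinear slice, and apply Lemma~\ref{splittingEqnLemma} twice with the same six points but different values of the scalar $l$. The one small imprecision is that property \textbf{(iv)} together with Lemma~\ref{nonzeroPtLemma} only produces the third pair $(u,v)$, while the other two pairs $(z_{c_1},z_{c_2})$ and $(-\tau_{c_1}e_{c_1},\tau_{c_2}e_{c_2})$ already satisfy the cross-orthogonality hypotheses directly from properties \textbf{(ii)}--\textbf{(iii)} and $\rho(z)=1$, $\rho(e)=-1$.
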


\begin{proof}It suffices to prove the claim for the case when $\sigma_i = \tau_i$ for $i \in [k] \setminus \{c_1, c_2\}$, for some pair of coordinates $c_1, c_2$, and $\sigma_{c_1} = \delta\tau_{c_1}, \sigma_{c_2} = \eta\tau_{c_2}$, where $\delta \eta = 1$. We shall abuse notation and write $e_i$ instead of $\tau_i e_i$: since the point $(\tau_1 e_1, \dots, \tau_k e_k)$ satisfies the same properties as $e_{[k]}$, this does not affect the correctness of the proof. Also, by symmetry, we may assume without loss of generality that $c_1 = 1$ and $c_2 = 2$. Write $\theta \colon G_{[2]} \to H$ for the map $\theta(x,y) = \phi(x, y, z_3 + e_3, \dots,  z_k + e_k)$. The claim now reduces to showing that
\[\theta(z_1 + e_1, z_2 + e_2) - \theta(e_1, z_2) - \theta(z_1, e_2) = \theta(z_1 + \delta e_1, z_2 + \eta e_2) - \delta\theta(e_1, z_2) - \eta\theta(z_1, e_2).\]
By property \textbf{(iv)} of Proposition~\ref{orthoStrong} and by Lemma~\ref{nonzeroPtLemma} there are $u \in G_1, v \in G_2$ such that $\rho(u, v, z_3 + e_3, \dots,  z_k + e_k) = 1$, and all other values of maps $\rho, \beta_{[m]}$ at points among $\{z_1, e_1, u\} \times \{z_2, e_2, v\} \times \{(z_3 + e_3, \dots,  z_k + e_k)\}$, involving $u$ or $v$, are zero. Therefore, by Lemma~\ref{splittingEqnLemma} (using statement~\eqref{eqSplitOrth} of the lemma for the second and fourth equalities) 
\begin{align*}\theta(z_1 + \delta e_1, z_2 + \eta e_2)& - \delta\theta(e_1, z_2) - \eta\theta(z_1, e_2)\\
&=\eta\theta(z_1 - \delta (-e_1), \delta z_2 + e_2) - \delta\theta(e_1, z_2) - \eta\theta(z_1, e_2)\\
&=\eta \Big(\delta \theta(z_1 - u, z_2 + v) - \delta\theta(-e_1 - u, e_2 + v)\\
& \hspace{1cm}+\theta(z_1,e_2) -\delta\theta(z_1,v)  - \delta^2 \theta(-e_1,z_2) + \delta \theta(u,z_2) + \delta \theta(-e_1,v) - \delta\theta(u,e_2)\Big)\\
&\hspace{2cm} - \delta\theta(e_1, z_2) - \eta\theta(z_1, e_2)\\
&=\theta(z_1 - u, z_2 + v) - \theta(-e_1 - u, e_2 + v) - \theta(z_1,v) + \theta(u,z_2) + \theta(-e_1,v) - \theta(u,e_2)\\
&=\theta(z_1 + e_1, z_2 + e_2) - \theta(e_1, z_2) - \theta(z_1, e_2),
\end{align*}
as desired.\end{proof}

We now return to the proof that $\phi^{\text{ext}}$ is well-defined. Recall that $q^0_{[k]} = z_{[k]}, \dots, q^s = (\lambda_1 x_1, \dots, \lambda_k x_k)$ and $p^0_{[k]} = z_{[k]}, \dots, p^t = (\mu_1 x_1, \dots, \mu_k x_k)$ are two $\mathbf{s}$-good sequences. Apply Proposition~\ref{orthoStrong} to find a point $e_{[k]} \in G_{[k]}$ that has properties described in that proposition. The assumptions of Proposition~\ref{manipProp} are satisfied. Applying the proposition twice with $\nu_i = \lambda_i^{-1}$, we obtain
\begin{align*}\phi(q^s_{[k]} \ominus q^{s-1}_{[k]}) + \dots + \phi(q^1_{[k]} \ominus q^0_{[k]}) = \Big(\prod_{i \in [k]} \lambda_i\Big)& \phi\Big(x_1 + \nu_1 e_1, \dots, x_k + \nu_k e_k\Big) - \phi\Big(z_1 + \nu_1 \lambda_1 e_1, \dots,  z_k + \nu_k \lambda_k e_k\Big)\\
&-\sum_{\emptyset \not=I \subsetneq [k]} \Big(\prod_{i \in [k]} \lambda_i\Big)\phi\Big((\nu_i e_{i})_{i \in I}, (x_{i})_{i \in [k] \setminus I}\Big)\\
&+\sum_{\emptyset \not=I \subsetneq [k]} \phi\Big((\nu_i \lambda_{i}e_{i})_{i \in I},(z_{i})_{i \in [k] \setminus I}\Big),
\end{align*}
and 
\begin{align*}\phi(p^t_{[k]} \ominus p^{t-1}_{[k]}) + \dots + \phi(p^1_{[k]} \ominus p^0_{[k]}) = \Big(\prod_{i \in [k]} \mu_i\Big) &\phi\Big(x_1 + \nu_1 e_1, \dots, x_k + \nu_k e_k\Big) - \phi\Big(z_1 + \nu_1 \mu_1e_1, \dots,  z_k + \nu_k \mu_k e_k\Big)\\
&-\sum_{\emptyset \not=I \subsetneq [k]} \Big(\prod_{i \in [k]} \mu_i\Big)\phi\Big((\nu_i e_{i})_{i \in I}, (x_{i})_{i \in [k] \setminus I}\Big)\\
&+\sum_{\emptyset \not=I \subsetneq [k]} \phi\Big((\nu_i\mu_{i}e_{i})_{i \in I}, (z_{i})_{i \in [k] \setminus I}\Big).
\end{align*}
Our task is to prove that these two expressions are equal. Hence, it suffices to prove that
\begin{align*}\phi\Big(z_1 + \tau_1e_1, \dots,  z_k + \tau_ke_k\Big) &- \sum_{\emptyset \not=I \subsetneq [k]} \phi\Big((\tau_{i}e_{i})_{i \in I}, (z_{i})_{i \in [k] \setminus I}\Big),\\
&=\phi\Big(z_1 + e_1, \dots,  z_k + e_k\Big) - \sum_{\emptyset \not=I \subsetneq [k]} \phi\Big((e_{i})_{i \in I}, (z_{i})_{i \in [k] \setminus I}\Big),\end{align*} 
where $\tau_i = \mu_i \lambda_i^{-1}$. Since  $\prod_{i \in [k]} \tau_i = 1$, this follows from Proposition~\ref{zeCancelation}.\\ [6pt]


\noindent\textbf{3.2. The extension map is multilinear.} 
\smallskip

Let $x_{[k]}, y_{[k]} \in B$ be arbitrary points that differ in a single coordinate. We need to show that $\phi^{\text{ext}}(x_{[k]}) - \phi^{\text{ext}}(y_{[k]}) = \phi^{\text{ext}}((x\ominus y)_{[k]})$. To begin, we show that $\phi^{\text{ext}}$ respects scalar multiplication in a single coordinate.

\begin{claim}Let $x_{[k]} \in B$ and let $\lambda \in \mathbb{F}$. Then
\[\phi^{\text{ext}}(x_1, \dots, x_{i-1}, \lambda x_i, x_{i+1}, \dots, x_k) = \lambda \phi^{\text{ext}}(x_{[k]}).\]\end{claim}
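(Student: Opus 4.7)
The plan is to break into three cases according to whether $\lambda = 0$, whether $x_{[k]}$ lies in $B^0$, or neither, and to verify the identity directly from the definition of $\phi^{\text{ext}}$ in each case.

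If $\lambda = 0$, then by multilinearity of $\rho$ and each $\beta_j$ (all of which vanish once one argument is set to zero), the point $(x_1, \ldots, 0, \ldots, x_k)$ lies in $B^0$, and so $\phi^{\text{ext}}$ evaluated there equals $\phi(x_1, \ldots, 0, \ldots, x_k)$. Applying the hypothesis $\phi(u \ominus v) = \phi(u) - \phi(v)$ to $u = v$ equal to any point in $B^0$ sharing the other coordinates with $x_{[k]}$ shows that $\phi$ vanishes on any slice at $0$, giving $\lambda \phi^{\text{ext}}(x_{[k]}) = 0$ on both sides. Similarly, if $x_{[k]} \in B^0$, the scaled point also lies in $B^0$, and both sides of the claim reduce to values of $\phi$; the identity is then just the slicewise linearity of $\phi$ (itself a straightforward consequence of the difference formula applied within the slice).

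The interesting case is $x_{[k]} \in B \setminus B^0$ and $\lambda \ne 0$. Here $\rho(x_1, \ldots, \lambda x_i, \ldots, x_k) = \lambda \rho(x_{[k]}) \ne 0$, so the scaled point also lies in $B \setminus B^0$. The key observation is that if $q^0_{[k]} = z_{[k]}, q^1_{[k]}, \ldots, q^s_{[k]} = (\mu_1 x_1, \ldots, \mu_k x_k)$ is any $\mathbf{s}$-good sequence used in the definition of $\phi^{\text{ext}}(x_{[k]})$, then the \emph{same} sequence is also $\mathbf{s}$-good for the scaled point, because its endpoint can be rewritten as $(\mu_1 x_1, \ldots, (\mu_i/\lambda)(\lambda x_i), \ldots, \mu_k x_k)$, and $\mu_i/\lambda$ together with the remaining $\mu_j$ is a nonzero scalar. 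By the well-definedness of $\phi^{\text{ext}}$ already established in Section 3.1, we may compute both $\phi^{\text{ext}}(x_{[k]})$ and $\phi^{\text{ext}}(x_1, \ldots, \lambda x_i, \ldots, x_k)$ using the telescoping sum $\phi(q^s \ominus q^{s-1}) + \cdots + \phi(q^1 \ominus q^0) + h_0$ associated to this common sequence. The two values therefore differ only by the ratio of the prefactors $\rho$, which is exactly $\lambda$.

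There is essentially no serious obstacle: the argument is a direct unpacking of the definition of $\phi^{\text{ext}}$, leveraging the well-definedness already proved. The one delicate point is that the rewriting $\mu_i x_i = (\mu_i/\lambda)(\lambda x_i)$ requires $\lambda \ne 0$, which is why $\lambda = 0$ must be disposed of separately; but that case is immediate since the scaled point then lives inside $B^0$, where the claim is just about $\phi$ itself.
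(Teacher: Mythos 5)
Your proof takes the same approach as the paper: handle $\lambda = 0$ and $x_{[k]} \in B^0$ separately, and in the main case observe that an $\mathbf{s}$-good sequence $q^0_{[k]}, \dots, q^s_{[k]} = (\mu_1 x_1, \dots, \mu_k x_k)$ for $x_{[k]}$ is also $\mathbf{s}$-good for the scaled point (rewriting the endpoint as $(\mu_1 x_1, \dots, (\mu_i/\lambda)(\lambda x_i), \dots, \mu_k x_k)$), so by well-definedness both values are computed from the same telescoping sum and the prefactor $\rho$ supplies the factor $\lambda$. This is precisely the paper's argument.

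The paper dismisses the two easy cases with ``we are done,'' whereas you spell them out, which is good, but there is a small slip in the $\lambda = 0$ case. You write ``applying the hypothesis $\phi(u\ominus v)=\phi(u)-\phi(v)$ to $u=v$,'' but $u\ominus v$ is only defined when $u$ and $v$ \emph{differ} in a single coordinate, so $u=v$ is not admissible. The repair is easy: take $v=(x_1,\dots,0,\dots,x_k)\in B^0$ and any $u\in B^0$ agreeing with $v$ outside coordinate $i$ with $u_i\neq 0$; then $u\ominus v=u$, so $\phi(u)=\phi(u)-\phi(v)$, i.e.\ $\phi(v)=0$. Apart from this cosmetic fix, the argument is sound and matches the paper's.
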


\begin{proof}If $x_{[k]} \in B^0$ or $\lambda = 0$, we are done, so assume the contrary. By Corollary~\ref{layerPath}, there is a $(2k+1)(2^k-1)$-good sequence $q^0_{[k]} = z_{[k]},q^1_{[k]}, \dots, q^s_{[k]} = (\lambda_1 x_1, \dots, \lambda_k x_k)$. Recall from~\eqref{phiDef} that $\phi^{\text{ext}}$ is defined by the formula
\[\phi^{\text{ext}}(x_{[k]}) = \rho(x_{[k]}) \Big(\phi(q^s_{[k]} \ominus q^{s-1}_{[k]}) + \dots + \phi(q^1_{[k]} \ominus q^0_{[k]}) + h_0.\Big)\]
Noting that the same $\mathbf{s}$-good sequence can be used for $(x_1, \dots, x_{i-1}, \lambda x_i, x_{i+1}, \dots, x_k)$, we find that
\[\phi^{\text{ext}}(x_1, \dots, x_{i-1}, \lambda x_i, x_{i+1}, \dots, x_k) = \lambda\rho(x_{[k]}) \Big(\phi(q^s_{[k]} - q^{s-1}_{[k]}) + \dots + \phi(q^1_{[k]} - q^0_{[k]}) + h_0\Big),\]
so the claim follows.\end{proof}

To finish the proof that $\phi^{\text{ext}}$ is multilinear, we distinguish two cases.\\

\noindent\emph{\textbf{Case 1:} at least one of the points $x_{[k]}, y_{[k]}, (x\ominus y)_{[k]}$ is in $B^0$.} 
\smallskip

Observe that $(x\ominus(x\ominus y))_{[k]}=y_{[k]}$, and also that $(y\ominus x)_{[k]}$ is equal to $(x\ominus y)_{[k]}$ except in the coordinate where $x$ and $y$ differ, which changes sign. Combining these observations and using the claim above, we may assume without loss of generality that $(x\ominus y)_{[k]} \in B^0$, which is equivalent to the statement that $\rho(x_{[k]}) = \rho(y_{[k]})$. If $\rho(x_{[k]}) = \rho(y_{[k]}) = 0$, then the map at all three points equals $\phi$, which we know to be multilinear. Hence, we may assume that $\rho(x_{[k]}) = \rho(y_{[k]}) \not= 0$. By Corollary~\ref{layerPath} there is a $(2k+1)(2^k-1)$-good sequence $q^0_{[k]} = z_{[k]}, \dots, q^s = (\lambda_1 y_1, \dots, \lambda_k y_k)$. But if we add the point $q^{s+1}_{[k]} = (\lambda_1 x_1, \dots, \lambda_k x_k)$, then we get an $\mathbf{s}$-good sequence for $x_{[k]}$ as well, so 
\begin{align*}\phi^{\text{ext}}(x_{[k]}) = &\rho(x_{[k]}) \Big(\phi(q^{s+1}_{[k]} \ominus q^{s}_{[k]}) + \phi(q^s_{[k]} \ominus q^{s-1}_{[k]}) + \dots + \phi(q^1_{[k]} \ominus q^0_{[k]}) + h_0\Big)\\
 = &\rho(x_{[k]}) \Big(\prod_{i \in [k]} \lambda_i\Big) \phi(x_{[k]} \ominus y_{[k]}) + \rho(y_{[k]}) \Big(\phi(q^s_{[k]} \ominus q^{s-1}_{[k]}) + \dots + \phi(q^1_{[k]} \ominus q^0_{[k]}) + h_0\Big)\\
= & \phi^{\text{ext}}(x_{[k]} \ominus y_{[k]}) + \phi^{\text{ext}}(y_{[k]}).\end{align*}

\noindent\emph{\textbf{Case 2:} no point belongs $B^0$.} 
\smallskip

In this case, we have that $\rho(x_{[k]}), \rho(y_{[k]}), \rho((x\ominus y)_{[k]}) \not= 0$. Let $d$ be the coordinate in which $x_{[k]}$ and $y_{[k]}$ differ. By Corollary~\ref{layerPath}, there is a $(2k+1)(2^k-1)$-good sequence $q^0_{[k]} = z_{[k]}, q^1_{[k]},\dots, q^s_{[k]} = (\lambda_1 x_1, \dots, \lambda_k x_k)$. Define points 
\[p^1_{[k]} = (\lambda_1 x_1, \dots, \lambda_{d-1} x_{d-1}, \mu y_d, \lambda_{d+1} x_{d+1}, \dots, \lambda_{k} x_{k})\] 
and 
\[p^2_{[k]} = (\lambda_1 x_1, \dots, \lambda_{d-1} x_{d-1}, \nu (x_d - y_d), \lambda_{d+1} x_{d+1}, \dots, \lambda_{k} x_{k}),\] 
where $\lambda, \mu$ are such that $\rho(y_{[k]}) = \mu^{-1} \prod_{i \in [k] \setminus \{d\}} \lambda_i^{-1}$ and $\rho((x\ominus y)_{[k]}) = \nu^{-1} \prod_{i \in [k] \setminus \{d\}} \lambda_i^{-1}$. The sequences $q^0_{[k]}, \dots,$ $q^s_{[k]},$ $p^1_{[k]}$ and $q^0_{[k]}, \dots,$ $q^s_{[k]},$ $p^2_{[k]}$ are also $\mathbf{s}$-good, so
\begin{align*}\phi^{\text{ext}}(x_{[k]}) = &\rho(x_{[k]}) \Big(\phi(q^s_{[k]} \ominus q^{s-1}_{[k]}) + \dots + \phi(q^1_{[k]} \ominus q^0_{[k]}) + h_0\Big),\\
\phi^{\text{ext}}(y_{[k]}) = &\rho(y_{[k]}) \Big(\phi(p^1_{[k]} \ominus q^s_{[k]}) + \phi(q^s_{[k]} \ominus q^{s-1}_{[k]}) + \dots + \phi(q^1_{[k]} \ominus q^0_{[k]}) + h_0\Big),\text{ and}\\
\phi^{\text{ext}}((x\ominus y)_{[k]}) = &\rho((x\ominus y)_{[k]}) \Big(\phi(p^2_{[k]} \ominus q^s_{[k]}) + \phi(q^s_{[k]} \ominus q^{s-1}_{[k]}) + \dots + \phi(q^1_{[k]} \ominus q^0_{[k]}) + h_0\Big).\\
\end{align*}
Hence, writing $\Lambda = \prod_{i \in [k] \setminus \{d\}} \lambda_i^{-1}$ and recalling that $\lambda_d \rho(x_{[k]}) = \mu \rho(y_{[k]}) = \nu \rho((x-y)_{[k]}) = \Lambda$, we have
\begin{align*}\phi^{\text{ext}}&(y_{[k]}) + \phi^{\text{ext}}((x\ominus y)_{[k]}) - \phi^{\text{ext}}(x_{[k]})\\ &=\rho(y_{[k]}) \phi(p^1_{[k]} \ominus q^s_{[k]}) + \rho((x\ominus y)_{[k]}) \phi(p^2_{[k]} \ominus q^s_{[k]})\\
&= \rho(y_{[k]}) \phi(\lambda_1 x_1, \dots, \lambda_{d-1} x_{d-1}, \mu y_d - \lambda_d x_d, \lambda_{d+1} x_{d+1}, \dots, \lambda_{k} x_{k})\\
&\hspace{2cm}+ \rho((x\ominus y)_{[k]}) \phi(\lambda_1 x_1, \dots, \lambda_{d-1} x_{d-1}, \nu (x_d -y_d) - \lambda_d x_d, \lambda_{d+1} x_{d+1}, \dots, \lambda_{k} x_{k})\\
&=\phi(\lambda_1 x_1, \dots, \lambda_{d-1} x_{d-1}, \rho(y_{[k]})(\mu y_d - \lambda_d x_d) + \rho((x\ominus y)_{[k]})(\nu (x_d -y_d) - \lambda_d x_d), \lambda_{d+1} x_{d+1}, \dots, \lambda_{k} x_{k})\\
&=\phi(\lambda_1 x_1, \dots, \lambda_{d-1} x_{d-1}, (\rho(y_{[k]})\mu) y_d + (\rho((x\ominus y)_{[k]})\nu) (x_d -y_d) - (\rho(x_{[k]}) \lambda_d) x_d, \lambda_{d+1} x_{d+1}, \dots, \lambda_{k} x_{k})\\
&=\phi\Big(\lambda_1 x_1, \dots, \lambda_{d-1} x_{d-1}, \Lambda \Big(y_d + (x_d -y_d) - x_d\Big), \lambda_{d+1} x_{d+1}, \dots, \lambda_{k} x_{k}\Big) = 0,
\end{align*}
completing the proof.
\bigskip

\boldSection{From multilinear maps on general varieties to global multilinear maps}

We are now ready to prove the main result, which will follow from the following proposition.

\begin{proposition}\label{mainThmIndStep}Let $\emptyset \in \mathcal{F} \subset \mathcal{P}[k]$ be a down-set\footnote{A collection of sets closed under taking subsets.} with a maximal set $S$. There are constants $C = C_{\mathcal{F}}, D = D_{\mathcal{F}}$ such that the following is true.\\
\indent Let $\beta_i \colon G_{I_i} \to \mathbb{F}$ be multilinear maps for $i \in [m]$, with $I_i \in \mathcal{F}$. Let $B = \{x_{[k]} \in G_{[k]} \colon (\forall i\in [m])\ \beta_i(x_{I_i}) = 0\}$ and let $\phi \colon B \to H$ be a multilinear map to a $\mathbb{F}$-vector space $H$. Then there exist $r \leq C m^D$, multilinear forms $\gamma_i \colon G_{J_i} \to \mathbb{F}$, $J_i \in \mathcal{F} \setminus \{S\}$, $i \in [r]$, and a multilinear map $\Phi \colon \{x_{[k]} \in G_{[k]} \colon (\forall i \in [r])\ \gamma_i(x_{J_i}) = 0\} \to H$ such that $\phi = \Phi$ on $\dom \phi \cap \dom \Phi$, where $\dom$ stands for the domain of a given function.\end{proposition}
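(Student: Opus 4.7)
The plan is to iteratively eliminate the constraints $\beta_i$ with $I_i=S$ by alternating two operations: a regularization step based on Theorem~\ref{strongInvThm}, which converts low-rank $S$-type forms into forms of strictly smaller type, and an extension step based on a suitable variant of Theorem~\ref{qrExtension}, which removes a quasirandom $S$-type form from the constraint list. Write $V$ for the linear span of those $\beta_i$ with $I_i=S$ and fix a rank threshold $r$, to be chosen below as a polynomial in $m$. As long as $V$ contains a nonzero form $\beta$ of analytic rank at most $r$, Theorem~\ref{strongInvThm} provides a decomposition $\beta=\sum_{l=1}^R\sigma_l\tau_l$ with $R=O(r^{D^{\text{ranks}}_{|S|}}+1)$, in which each $\sigma_l,\tau_l$ is multilinear on a strict nonempty subset of $S$. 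Because $\mathcal F$ is a down-set containing $S$, these subsets lie in $\mathcal F\setminus\{S\}$. Replace the constraint $\beta=0$ by the stronger family $\sigma_1=\dots=\sigma_R=0$: the new variety sits inside $B$, so $\phi$ remains well-defined, and $\dim V$ drops by one. Iterate until either $V=0$, in which case we simply restrict $\phi$ to obtain $\Phi$, or every nonzero form in the remaining $V$ has analytic rank strictly greater than $r$.

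In the latter case, apply the extension step. Pick a generator $\rho$ of the regularized $V$: every nonzero element of $\rho+\mathrm{span}(\text{remaining generators})$ has analytic rank at least $r$, so for $r$ large enough the quasirandomness hypothesis~\eqref{qrCondition} is satisfied. Applying Theorem~\ref{qrExtension}, with any choice of extension value $h_0$ at a point where $\rho$ does not vanish, produces a multilinear map $\phi^{\text{ext}}$ whose domain has the constraint $\rho=0$ removed. Iterate over the remaining generators of $V$; each step preserves the regularity (hence the quasirandomness) of the surviving generators. After at most $\dim V\leq m$ extension steps we are left with a multilinear map defined on a variety cut out entirely by forms of type in $\mathcal F\setminus\{S\}$, which we take as $\Phi$.

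For the count, regularization performs at most $\dim V\leq m$ reductions, each adding $O(r^{D^{\text{ranks}}_{|S|}})$ new forms of type in $\mathcal F\setminus\{S\}$; the extension step adds nothing. The final number of forms is thus $m+O(mr^{D^{\text{ranks}}_{|S|}})$, and the threshold in~\eqref{qrCondition} requires $r$ to be roughly linear in this total. Choosing $r=C_{\mathcal F}m^{D_{\mathcal F}}$ with a sufficiently large exponent $D_{\mathcal F}$ closes this circular dependence and yields the claimed polynomial bound $r\leq C_{\mathcal F}m^{D_{\mathcal F}}$.

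The main technical obstacle is that Theorem~\ref{qrExtension} is stated for $\rho\colon G_{[k]}\to\mathbb F$ of full type $[k]$, whereas in Proposition~\ref{mainThmIndStep} the maximal set $S$ may be a proper subset of $[k]$ and $\rho$ depends only on $x_S$. One therefore needs a version of Theorem~\ref{qrExtension} in which the form extended past has type $S\subsetneq[k]$. The natural approach is to apply Theorem~\ref{qrExtension} slicewise: for each $x_{[k]\setminus S}$, work inside $G_S$ where the $S$-type forms are genuinely top-type, and then verify that the slicewise extensions assemble into a map that is multilinear in \emph{every} coordinate, including those outside $S$. Establishing this global multilinearity, by a path-independence argument in the spirit of Section~3.1, is the most delicate step.
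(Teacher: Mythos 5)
Your high-level plan (eliminate the $S$-type constraints using Theorem~\ref{strongInvThm} for regularization and a slicewise Theorem~\ref{qrExtension} for extension) is in the right spirit, but the order in which you apply the two operations creates a circular dependence in the bounds that cannot be closed, and this is a genuine gap. You regularize first: every form in $V$ of analytic rank $\leq r$ is replaced by its partition-rank decomposition, adding up to $O(mr^{D_0})$ lower-type constraints where $D_0 = D^{\text{ranks}}_{|S|}$. Only then do you extend. But the quasirandomness hypothesis~\eqref{qrCondition} requires $\rho$ (together with the surviving $S$-type forms) to have analytic rank at least a constant times the \emph{total} number of constraints, which after regularization is $M \sim m r^{D_0}$, whereas the surviving $S$-type forms are only guaranteed to have analytic rank $> r$. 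You therefore need $r \gtrsim m r^{D_0}$, i.e.\ $1 \gtrsim m r^{D_0 - 1}$, which fails already for $m\geq 2$ when $D_0=1$ and fails for every $r\geq 1$ when $D_0 > 1$. Setting $r = Cm^E$ with $E$ large does not help: the inequality becomes $E \geq E D_0 + 1$, i.e.\ $E(1-D_0)\geq 1$, impossible for $D_0\geq 1$ — and the known proofs of Theorem~\ref{strongInvThm} give $D_0 > 1$ whenever $|S|\geq 3$.

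The paper avoids this by reversing the order. It fixes a threshold depending only on $m$ (namely the right-hand side of~\eqref{qrCondition} with the original $m$), splits $\operatorname{span}\{\beta_j : I_j=S\}$ into a ``structured'' part $\alpha_1,\dots,\alpha_n$ of analytic rank $O_k(m)$ and a ``quasirandom'' complementary part $\rho_1,\dots,\rho_{s-n}$, and \emph{first} extends past the $\rho_i$'s slicewise — during which the constraint count in each slice stays $O(m)$ and the ambient arity is $|S|\leq k$, so the fixed threshold suffices — keeping the $\alpha_j$'s as constraints throughout. Only at the very end does it apply Theorem~\ref{strongInvThm} to the $\alpha_j$'s, each of which by construction has analytic rank $O_k(m)$ independent of any auxiliary parameter, producing $O_k(m^{D_0})$ lower-type forms apiece and a clean polynomial total. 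A secondary point: you flag but do not carry out the verification that the slicewise extensions are multilinear in the directions outside $S$. The paper's argument here is not a Section~3.1-style path-independence argument but an appeal to the \emph{uniqueness} clause of Theorem~\ref{qrExtension}: for slices $x^1,x^2,x^3$ (differing in one coordinate $d\notin S$, with $x^1_d - x^2_d = x^3_d$), the maps $\theta_{x^1}-\theta_{x^2}$ and $\theta_{x^3}$ are both multilinear extensions of $\tau_{x^1}-\tau_{x^2}=\tau_{x^3}$ sending the same base point $z_S$ to $0$, hence coincide. This requires a single $z_S$ valid across all slices, furnished by Lemma~\ref{nonzeroPtLemma}, and is a step you would need to fill in rather than leave at the level of a sketch.
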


\begin{proof}[Proposition \ref{mainThmIndStep} implies Theorem~\ref{mainThm}]Let $\mathcal{F}_1 = \mathcal{P}[k] \supsetneq \mathcal{F}_2 \supsetneq \dots \supsetneq \mathcal{F}_{2^k} = \{\emptyset\}$ be a sequence of down-sets in $\mathcal{P}[k]$, where we remove a maximal set $S_i$ from each down-set $\mathcal{F}_i$ to obtain the next one. Apply Proposition~\ref{mainThmIndStep} to $\mathcal{F}_1, S_1$ and $\phi$ to get a new multilinear map $\phi^1$ such that $\phi = \phi^1$ on $\dom \phi \cap \dom \phi^1$. Then, apply Proposition~\ref{mainThmIndStep} to $\mathcal{F}_2, S_2$ and $\phi^1$ to get another multilinear map $\phi^2$ such that $\phi^1 = \phi^2$ on $\dom \phi^1 \cap \dom \phi^2$ and proceed like this. The final map we get $\Phi = \phi^{2^k}$ is then a global multilinear map, and $\phi = \phi^{2^k}$ holds on $\dom \phi \cap \dom \phi^1 \cap \dots \cap \dom \phi^{2^k}$, which a multilinear variety of the codimension claimed in Theorem~\ref{mainThm}.\end{proof}

\begin{proof}[Proof of Proposition~\ref{mainThmIndStep}]Reordering the maps if necessary, we may assume that $I_1 = \dots = I_s = S$ and $I_{s+1}, \dots, I_m \not= S$. Let $\lambda^1, \dots, \lambda^n\in \mathbb{F}^s$ be a maximal independent sequence such that for each $i \in [n]$
\[\ex_{x_S} \chi\Big(\sum_{j \in [s]} \lambda^i_j \beta_j(x_S)\Big) \geq \frac{1}{2k^2}\mathbf{f}^{-(2k^2 + k + 1)(m+1)2^{2k+3}}.\]
Extend $\lambda^1, \dots, \lambda^n$ with $\mu^1, \dots, \mu^{s-n}$ to a basis of $\mathbb{F}^s$. Write $\rho_i = \sum_{j \in [s]} \mu^i_j \beta_j$ for $i \in [s-n]$ and $\alpha_i = \sum_{j \in [s]} \lambda^i_j \beta_j$ for $i \in [n]$. Then $\phi$ is defined on 
\begin{align*}\{x_{[k]} \in G_{[k]} \colon (\forall i \in [s-n])\ \rho_i(x_S) = 0\} \cap \{x_{[k]} &\in G_{[k]} \colon (\forall i \in [n])\ \alpha_i(x_S)= 0\}\\
 &\cap \{x_{[k]} \in G_{[k]} \colon (\forall i \in [s + 1, m])\ \beta_i(x_{I_i}) = 0\}.\end{align*}
Let $\mathcal{I} = \{i \in [s+1, m] \colon I_i \subset [k] \setminus S\}$. Then by Lemma~\ref{mlBias} the maps satisfy 
\begin{align*}\Big|\ex_{x_{S}} \chi\Big(\sum_{i \in [s-n]} \nu_i \rho_i(x_S) + \sum_{i \in [n]} \tau_i \alpha_i(x_S) + \sum_{i \in [s+1,m] \setminus \mathcal{I}} \sigma_i \beta_i(x_{S \cap I_i})\Big)\Big| \leq &\Big|\ex_{x_{S}} \chi\Big(\sum_{i \in [s-n]} \nu_i \rho_i(x_S) + \sum_{i \in [n]} \tau_i \alpha_i(x_S)\Big)\Big|\\
 < &\frac{1}{2k^2}\mathbf{f}^{-(2k^2 + k + 1)(m+1)2^{2k+3}}\end{align*}
when $\nu \in \mathbb{F}^{[s-n]} \setminus \{0\}, \tau \in \mathbb{F}^n, \sigma \in \mathbb{F}^{[s+1,m] \setminus \mathcal{I}}$ and 
\begin{equation}(\forall i \in [n])\ex_{x_{S}} \chi\Big(\alpha_i(x_S)\Big) \geq \frac{1}{2k^2}\mathbf{f}^{-(2k^2 + k + 1)(m+1)2^{2k+3}}.\label{structMap}\end{equation}

\begin{claim*}For $i \in [0,s-n]$, there is a multilinear variety $B^i \subset G_{[k] \setminus S}$ of codimension at most $im$ defined by maps whose coordinate sets belong to $\mathcal{F} \setminus \{S\}$, and a multilinear map $\psi^i \colon \dom \psi^i \to H,$ where
\begin{align*}\dom \psi^i = (B^i \times G_S) \cap \{x_{[k]} \in G_{[k]} \colon (\forall j \in [i+1, s&-n])\ \rho_j(x_S) = 0\} \cap \{x_{[k]} \in G_{[k]} \colon (\forall j \in [n])\ \alpha_j(x_S) = 0\}\\
& \cap \{x_{[k]} \in G_{[k]} \colon (\forall j \in [s+1,m])\ \beta_j(x_{I_j}) = 0\}\end{align*}
such that $\psi^i = \phi$ on $\dom \phi \cap \dom \psi^i$.\end{claim*}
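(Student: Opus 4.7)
The proof is by induction on $i$ from $0$ to $s-n$. For the base case, take $B^0 = G_{[k]\setminus S}$ (codimension zero, no defining forms) and $\psi^0 = \phi$; then $\dom\psi^0 = \dom\phi$, since the zero set of the original forms $\beta_1,\dots,\beta_s$ coincides with the zero set of $\rho_1,\dots,\rho_{s-n},\alpha_1,\dots,\alpha_n$ (they are related by an invertible change of basis on $\mathbb F^s$), and the remaining constraints match. All required conditions are then immediate.

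For the inductive step, given $B^i$ and $\psi^i$, my goal is to extend $\psi^i$ past the single constraint $\rho_{i+1}(x_S) = 0$. Since $\rho_{i+1}$ depends only on the $S$-coordinates, the natural move is to work slicewise: for each $x_{[k]\setminus S} \in B^i$, the restriction of $\psi^i$ to $\{x_{[k]\setminus S}\}\times G_S$ is a multilinear map on a variety in $G_S$, to which I would apply Theorem~\ref{qrExtension} on $G_S$, taking $\rho$ there to be $\rho_{i+1}$ and taking the slicewise $\beta$'s to be the restrictions of the other forms. The index set $\mathcal I$ in this slicewise application contains only the forms defined on all of $G_S$, namely $\rho_j$ for $j>i+1$ and the $\alpha_j$: for $j\in[s+1,m]$ the slicewise restriction $\beta_j(\cdot, x_{I_j\setminus S})$ lives on $I_j\cap S\subsetneq S$ (since $I_j\neq S$ and $S$ is maximal in $\mathcal F$, so $S\not\subset I_j$), hence does not belong to $\mathcal I$. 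In particular the slicewise QR condition is independent of $x_{[k]\setminus S}$ and therefore holds either everywhere or nowhere.

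To verify it, note that any combination $\rho_{i+1} + \sum\nu_j\rho_j + \sum\tau_j\alpha_j$ equals $\nu'\cdot\beta$ for a coefficient vector $\nu'\in\mathbb F^s$ whose $\mu^{i+1}$-component is $1$, so $\nu'\notin\spn\{\lambda^1,\dots,\lambda^n\}$. The maximality of this sequence then forces its character-sum bias on $G_S$ to be strictly smaller than $\frac{1}{2k^2}\mathbf f^{-(2k^2+k+1)(m+1)2^{2k+3}}$, which is stronger than the slicewise threshold demanded by Theorem~\ref{qrExtension}, since $|S|\le k$ and the slicewise application involves at most $m-i-1 \le m$ forms.

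The main obstacle is to glue the slicewise extensions, each only unique up to a single parameter in $H$, into one globally multilinear $\psi^{i+1}\colon\dom\psi^{i+1}\to H$. My plan is to fix one global reference point $z_{[k]}\in\dom\psi^{i+1}$ with $\rho_{i+1}(z_S)\neq 0$ together with a value $h_0\in H$, and then define $\psi^{i+1}$ by the natural slicewise analogue of formula~\eqref{phiDef}, with the paths in each slice originating at $z_S$. Multilinearity in the $S$-coordinates is inherited directly from Theorem~\ref{qrExtension}; multilinearity in the $[k]\setminus S$-coordinates should follow because the prefactor $\rho_{i+1}(x_S)$ is constant in those directions and $\psi^i$ is already multilinear in them, so the formula is linear in each $[k]\setminus S$-coordinate. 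To guarantee that such a reference point exists and that the required non-$B^0$ paths stay inside $\dom\psi^{i+1}$ in each slice, it may be necessary to restrict $B^i$ to a subvariety $B^{i+1}\subset B^i$ by adjoining up to $m$ further multilinear forms with coordinate sets in $\mathcal F\setminus\{S\}$, which accounts for the claimed codimension bound $\le (i+1)m$ and closes the induction.
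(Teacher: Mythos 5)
Your overall strategy matches the paper's: induct on $i$, peel off one $\rho$ at a time, apply Theorem~\ref{qrExtension} slicewise in the $S$-directions, verify the quasirandomness condition via the maximality of $\lambda^1,\dots,\lambda^n$, and restrict $B^i$ to a subvariety so that the reference slice stays inside the domain. The slicewise $\mathcal{I}$ identification and the character-sum verification are also right. However, the gluing step has two genuine gaps.

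First, your plan is underspecified: a single global reference $z_{[k]}$ with a value $h_0$ pins down the extension in the slice through $z_{[k]\setminus S}$ only. Theorem~\ref{qrExtension} requires, for \emph{each} slice $x_{[k]\setminus S}\in B^{i+1}$, a choice of reference point in $G_S$ together with a target value, and these choices must be mutually compatible to make the resulting map multilinear across slices. The paper fixes a single $z_S\in G_S$ (chosen via Lemma~\ref{nonzeroPtLemma} so that $\rho_{i+1}(z_S)=1$, the later $\rho_j$'s, the $\alpha_j$'s, and the $\beta_j$'s with $I_j\subset S$ all vanish), imposes the further constraints $\beta_j(x_{I_j\setminus S},z_{I_j\cap S})=0$ for $I_j\not\subset S$ to define $B^{i+1}$, and declares the extension value at $(x_{[k]\setminus S},z_S)$ to be $0$ for \emph{every} $x_{[k]\setminus S}\in B^{i+1}$. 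This is the missing global choice.

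Second, the argument that ``the formula is linear in each $[k]\setminus S$-coordinate'' does not go through as stated, because the paths used in formula~\eqref{phiDef} live in slicewise domains that \emph{do} depend on $x_{[k]\setminus S}$ (through the forms $\beta_j(\cdot,x_{I_j\setminus S})$), so they need not be chosen compatibly across the three slices $x^1,x^2,x^3$. The paper sidesteps this entirely via the \emph{uniqueness} clause of Theorem~\ref{qrExtension}: with the reference value at $z_S$ fixed to $0$ in every slice, both $\theta_{x^1}-\theta_{x^2}$ and $\theta_{x^3}$ are multilinear extensions of $\tau_{x^3}$ that vanish at $z_S$, hence coincide. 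Without this uniqueness argument (or an explicit demonstration that a common path can be used), the multilinearity in the $[k]\setminus S$ directions is not established.
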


\begin{proof}[Proof of claim] We argue by induction on $i$. The base case is $i = 0$, where we may take $\psi^0 = \phi$. Assume now that the claim holds for some $i-1 < s- n$, and let $B^{i-1}$ and $\psi^{i-1}$ be the corresponding variety and map. Take an arbitrary $z_S \in G_S$ such that $\rho_{i}(z_S) = 1$, $\rho_j(z_S) = 0$ for $j > i$, $\alpha_{j}(z_S) = 0$ for $j \in [n]$, and $\beta_j(z_{I_j}) = 0$ for $I_j \subset S$. Such a point exists by Lemma~\ref{nonzeroPtLemma}.\\
\indent We define $B^i = \{x_{[k] \setminus S} \colon (\forall j \in [s + 1, m] \colon I_j \not\subset S)\  \beta_j(x_{I_j \setminus S}, z_{I_j \cap S}) = 0\} \cap B^{i-1}$. Notice that coordinate sets of the maps defining $B^i$ lie in $\mathcal{F} \setminus \{S\}$. Next, define $\psi^i \colon \dom \psi^i \to H$, where
\begin{align*}\dom \psi^i = (B^i &\times G_S) \cap \{x_{[k]} \in G_{[k]}\colon (\forall j \in [i+1, s-n])\rho_j(x_S) = 0\}\\
& \cap \{x_{[k]} \in G_{[k]} \colon (\forall j \in [n])\ \alpha_j(x_S) = 0\}\cap \{x_{[k]} \in G_{[k]} \colon (\forall j \in [s+1,m])\ \beta_j(x_{I_j}) = 0\},\end{align*}
by extending the map $\tau_{x_{[k] \setminus S}} \colon (\dom \psi^{i-1})_{x_{[k] \setminus S}} \to H$ defined by $y_S \mapsto \psi^{i-1}(x_{[k] \setminus S}, y_S)$ by mapping $z_S$ to 0, for each $x_{[k] \setminus S} \in B^{i}$. Note that $\dom \psi^i$ has the property that $\dom \psi^i = \bigcup_{x_{[k] \setminus S} \in B^{i}} \{x_{[k] \setminus S}\} \times (\dom \psi^{i})_{x_{[k] \setminus S}}	$ and for each $x_{[k] \setminus S} \in B^{i}$, $z_S \in (\dom \psi^{i})_{x_{[k] \setminus S}} \setminus (\dom \psi^{i-1})_{x_{[k] \setminus S}}$. 
\indent By Theorem~\ref{qrExtension}, for each $x_{[k] \setminus S} \in B^i$ there is a unique multilinear extension $\theta_{x_{[k] \setminus S}}$ that satisfies this. We thus define $\psi^i$ for $(x_{[k] \setminus S}, y_S) \in \dom \psi^i$, by setting $\psi^i(x_{[k] \setminus S}, y_S) = \theta_{x_{[k] \setminus S}} (y_S)$.\\
\indent It suffices to show that $\psi^i$ is multilinear in directions $[k] \setminus S$. To this end, fix some $d \in [k] \setminus S$ and take $x^{1}_{[k] \setminus S}, x^{2}_{[k] \setminus S}, x^{3}_{[k] \setminus S} \in B^{i}$ which differ in coordinate $d$ and $x^1_d - x^2_d = x^3_d$. Write $D_2 = \cap_{j \in [3]} \dom \theta_{x^j_{[k] \setminus S}}$ and $D_1 = D_2 \cap \{y_S \in G_S \colon \rho_i(y_S) = 0\}$. Observe that $\theta_{x^1_{[k] \setminus S}} - \theta_{x^2_{[k] \setminus S}}$ is a multilinear map that extends $\tau_{x^1_{[k] \setminus S}} - \tau_{x^2_{[k] \setminus S}}$ from $D_1$ to $D_2$ and maps $z_S$ to 0. Also, $\theta_{x^3_{[k] \setminus S}}$ is a multilinear map that extends $\tau_{x^3_{[k] \setminus S}}$ from $D^1$ to $D_2$ and maps $z_S$ to 0. But $\tau_{x^1_{[k] \setminus S}} - \tau_{x^2_{[k] \setminus S}} = \tau_{x^3_{[k] \setminus S}}$ on $D_1$, so by uniqueness of extensions in Theorem~\ref{qrExtension}, we have $\theta_{x^1_{[k] \setminus S}} - \theta_{x^2_{[k] \setminus S}} = \theta_{x^3_{[k] \setminus S}}$ on $D_2$, as desired.\end{proof}

Apply the claim above with $i = s-n$. After that, it remains to remove maps $\alpha_{[n]}$. From~\eqref{structMap} and Theorem~\ref{strongInvThm}, we may find $m' \leq m C^{\text{ranks}}_k \Big(\Big((2k^2 + k + 1)(m+1)2^{2k + 3} + 2k^2\Big)^{D^{\text{ranks}}_k} + 1\Big)$ and further multilinear forms $\gamma_j \colon G_{J_j} \to \mathbb{F}$, $J_j \in \mathcal{F} \setminus \{S\}$, $j \in [m']$ such that 
\[\Big\{x_{[k]} \in G_{[k]} \colon (\forall j \in [m'])\ \gamma_j(x_{J_j}) = 0\Big\} \subseteq \Big\{x_{[k]} \in G_{[k]} \colon (\forall j \in [n])\ \alpha_j(x_S) = 0\Big\}.\]
Hence, the map $\Phi$ with domain
\begin{align*}\dom \Phi = (B^{s-n} \times G_S) \cap \{x_{[k]} \in G_{[k]} &\colon (\forall j \in [m']) \gamma_j(x_{J_j}) = 0\}\\
& \cap \{x_{[k]} \in G_{[k]} \colon (\forall j \in [s+1,m]) \beta_j(x_{I_j}) = 0\}\end{align*}
and $\Phi = \psi^{s-n}$ on its domain is the desired map. Its domain $\dom \Phi$ has codimension at most $m^2 + m + m' = O_k(m^{O_k(1)})$, which is the claimed bound. This completes the proof of Proposition~\ref{mainThmIndStep} and with it the proof of Theorem~\ref{mainThm}. \end{proof}

\end{document}